\newtheorem{theorem}{Theorem}[section]
\newtheorem{lemma}[theorem]{Lemma}
\newtheorem{proposition}[theorem]{Proposition}
\theoremstyle{definition}
\newcommand{\cont}{\operatorname{\mathsf{con}}}
\newcommand{\simp}{\operatorname{\mathsf{sim}}}
\theoremstyle{remark} \numberwithin{equation}{section}
\begin{document}

\title{A new example of limit variety of aperiodic monoids}

\thanks{This research was supported by the National
Natural Science Foundation of China (No.~10971086, 11371177)}

\author[W. T. Zhang]{Wen Ting Zhang$^\star$}\thanks{$^\star$Corresponding author}

\author[Y. F. Luo]{Yan Feng Luo}

\address{School of Mathematics and Statistics, Lanzhou University, Lanzhou, Gansu 730000, People's Republic of China; Key Laboratory of Applied Mathematics and Complex Systems, Gansu Province}
\email{zhangwt@lzu.edu.cn}
\email{luoyf@lzu.edu.cn}

\subjclass[2000]{20M07, 03C05, 08B15}

\keywords{Monoid, finitely based, variety, limit variety, subvariety lattice}

\begin{abstract}
A limit variety is a variety that is minimal with respect to being non-finitely based. The two limit varieties of Marcel Jackson are the only
known examples of limit varieties of aperiodic monoids. Our previous work had shown that there exists a limit subvariety of aperiodic monoids that is different from Marcel Jackson's limit varieties. In this paper, we introduce a new limit variety of aperiodic monoids.
\end{abstract}

\maketitle

\section{Introduction} \label{sec intro}
A variety of algebras is \textit{finitely based} if there is a finite subset of its identities
from which all of its identities may be deduced, otherwise, the variety is
said to be \textit{non-finitely based}. An
algebra is \textit{finitely based} if it generates a finitely based
variety, otherwise, the algebra is said to be \textit{non-finitely based}. There are many finitely based and many non-finitely based
finite semigroups, and consequently the finite basis property for finite semigroups, and for finite algebras in general has
been one of the most extensively studied in facets of varieties. Refer to the surveys of Volkov \cite{Vol01} for a great deal of information on varieties, identities, and the finite basis problem for semigroups.

A variety is \textit{hereditarily finitely
based} if all its subvarieties are finitely based.  A variety is called \textit{limit variety} if
it is non-finitely based but every proper subvariety is finitely based; in other words,
limit varieties are precisely minimal non-finitely based varieties. Zorn's lemma
implies that each non-finitely based variety contains some limit subvariety; thus, a
variety is hereditarily finitely based if and only if it contains no limit subvarieties.
Therefore classifying hereditarily finitely based varieties in a certain sense reduces
to classifying limit varieties whence the latter task appears to be very hard in
general. Moreover, finding any concrete limit variety turns out to be nontrivial.
For example, no concrete limit variety of groups is known so far even though a
recent result by Kozhevnikov \cite{Kozhevnikov} shows that there are uncountably many of them.
Amongst (locally finite) groups, there are known to be infinitely many limit varieties, however the explicit construction of such an example remains one of the
foremost unsolved problems in group variety theory \cite{Kra03}
In contrast, for inverse semigroup varieties, a complete classification of non-group
limit varieties (and hence, a characterization of hereditarily finitely based varieties
modulo groups) has been found by Kleiman \cite{Kleiman}.

Recall that a monoid is \textit{aperiodic} if all its subgroups are
trivial. This article is concerned with the class $\mathfrak{A}$ of
aperiodic monoids and its subvarieties. A result of Kozhevnikov
implies the existence of continuum many limit varieties of monoids
consisting of groups~\cite{Kozhevnikov}. This makes classification of
limit varieties of monoids unfeasible unless restrictions are placed
on the groups lying in the variety.  The class $\mathfrak{A}$ is
arguably the most obvious natural candidate for attention. In the
early 2000s, Jackson proved that the variety $\mathsf{var} \{J_1\}$
generated by the monoid
\[
J_1 = \left\langle a,b,s,t \left|\, \text{$xy=0$ if $xy$ is not a factor of $asabtb$} \right. \right\rangle \cup \{1\} %
\]
of order 21 and the variety $\mathsf{var} \{J_2\}$ generated by the monoid
\[
J_2 = \left\langle a,b,s,t \left|\, \text{$xy=0$ if $xy$ is not a factor of either $absatb$ or $asbtab$} \right. \right\rangle \cup \{1\} %
\]
of order 35 are limit subvarieties of
$\mathfrak{A}$~\cite[Proposition~5.1]{Jac05fin}. As commented by
Jackson, no other similar examples of limit varieties could be found
\cite[Section~5]{Jac05fin}. This led him to pose the question~{\cite[Question~1]{Jac05fin}}:
Is there any finitely generated, non-finitely based
subvariety of $\mathfrak{A}$ that contains neither $\mathsf{var} \{J_1\}$
nor $\mathsf{var} \{J_2\}$?

In \cite{LeeZhang}, we show that the $\mathcal{J}$-trivial semigroup
\[
L=\left\langle\,a,b\left|\,a^2=a,\,b^2=b,\,aba=0\right.\right\rangle %
\]
of order six is one of minimal non-finitely semigroups. Let $L^1$ denote the monoid obtained by adjoining an identity element to $L$ and let $\mathsf{var} \{L^1\}$ denote the variety generated by $L^1$. It is easy to see that $\mathsf{var} \{L^1\}$ is a subvariety of $\mathfrak{A}$ that contains neither $\mathsf{var} \{J_1\}$ nor
$\mathsf{var} \{J_2\}$. In \cite{zhang13}, we show that $\mathsf{var} \{L^1\}$
is non-finitely based, and so there exists a limit subvariety of $\mathfrak{A}$ that is different from $\mathsf{var} \{J_1\}$ and $\mathsf{var} \{J_2\}.$ Consequently, identify all limit subvarieties of $\mathsf{var} \{L^1\}$ is an unavoidable step in the classification of limit varieties of aperiodic monoids.

The main goal of this paper and its prequel is to give an explicit example of a limit variety of $\mathfrak{A}$.
Let $A^1$ denote the monoid obtained by adjoining an identity element to the semigroup $A = \{ 0,a,b,c,d,e\}$ given by the following multiplication table: 
\[
\begin{array}
[c]{c|cccccc}
A & 0 & a & b & c & d & e \\ \hline %
0 & 0 & 0 & 0 & 0 & 0 & 0 \\ %
a & 0 & 0 & 0 & 0 & 0 & a \\ %
b & 0 & 0 & 0 & 0 & 0 & b \\ %
c & 0 & 0 & a & 0 & c & 0 \\ %
d & 0 & 0 & b & 0 & d & 0 \\ %
e & 0 & a & a & c & c & e
\end{array}
\]
The semigroup $A$ was first investigated by Lee and Zhang~\cite[Section~19]{LeeZhang}, where it was shown to be finitely based. Let $B^1$ be the semigroup that is dual to $A^1$. %
In \cite{zhang16}, by using a sufficient condition, we have shown that the semigroup $A^1 \times B^1$ is non-finitely based. In this paper, all of proper monoid subvarieties of the variety generated by $A^1 \times B^1$ are shown to be finitely based, and the monoid subvariety lattice of $\mathsf{var}_{\mathbb{M}} \{A^1 \times B^1\}$ will be completely described. Hence the monoid variety $\mathsf{var}_{\mathbb{M}} \{A^1 \times B^1\}$ is a limit monoid variety. Also,
an identity basis $A^1 \times B^1$ will be given, the finite membership problem for the variety generated by $A^1 \times B^1$ admits a polynomial algorithm.

In section \ref{sec basis}, an identity basis for $\mathsf{var} \{A^1 \times B^1\}$ will be given.
In section \ref{sec lattice}, all monoid subvarieties of $\mathsf{var} \{A^1 \times B^1\}$ will be characterized and each of them is finitely based. Furthermore, the monoid subvariety lattice of $\mathsf{var}_{\mathbb{M}} \{A^1 \times B^1\}$ will be completely described. Hence $\mathsf{var}_{\mathbb{M}} \{A^1 \times B^1\}$ is a limit monoid variety.

Recall that a variety is \textit{small} if it has finitely many subvarieties, a small variety
is \textit{cross} if it is finitely based and finitely generated, and a non-cross variety is
\textit{almost cross} if all its proper subvarieties are Cross.
Hence $\mathsf{var} \{A^1 \times B^1\})$ is a small almost cross variety.

\section{Preliminaries} \label{sec prelim}

Most of the notation and background material of this article are
given in this section. Refer to the monograph~\cite{BurSan81} for
any undefined terminology.

Let $\mathcal{X}$ be a fixed countably infinite alphabet
throughout. For any subset $\mathcal{A}$ of $\mathcal{X}$, denote
by $\mathcal{A}^{\ast}$ the free monoid over $\mathcal{A}$.
Elements of $\mathcal{X}$ and $\mathcal{X}^{\ast}$ are referred to
as \textit{letters} and \textit{words} respectively.

The \textit{content} of a
word $\mathbf{w}$, denoted by $\mathsf{con} (\mathbf{w})$, is the
set of letters occurring in $\mathbf{w}$; The \textit{multiplicity} of a letter $x$ in $\mathbf{w}$, denoted by
$\mathsf{m} (x, \mathbf{w})$, is the number of times $x$ occurs in
$\mathbf{w}$; A letter $x$ is
\textit{simple} in a word $\mathbf{w}$ if $\mathsf{m} (x,
\mathbf{w}) = 1$; otherwise, $x$ is \textit{non-simple} in
$\mathbf{w}$. The set of simple letters of a word $\mathbf{w}$ is
denoted by $\mathsf{sim} (\mathbf{w})$ and the set of non-simple
letters of $\mathbf{w}$ is denoted by $\mathsf{non} (\mathbf{w})$.
Note that $\mathsf{con} (\mathbf{w}) = \mathsf{sim} (\mathbf{w})
\cup \mathsf{non} (\mathbf{w})$ and $\mathsf{sim} (\mathbf{w}) \cap
\mathsf{non} (\mathbf{w}) = \emptyset$.

An identity is typically written as $\mathbf{w} \approx
\mathbf{w}'$ where $\mathbf{w}$ and $\mathbf{w}'$ are nonempty
words. Let $\Pi$ be any set of identities. The deducibility of an
identity $\mathbf{w} \approx \mathbf{w}'$ from $\Pi$ is indicated
by $\Pi \vdash \mathbf{w} \approx \mathbf{w}'$ or $\mathbf{w}
\stackrel{\Pi}{\approx} \mathbf{w}'$. The variety \textit{defined}
by $\Pi$, denoted by $\mathbb{V}(\Pi)$, is the class of all semigroups that satisfy all
identities in $\Pi$; in this case, $\Pi$ is said to be a
\textit{basis} for the variety.

For any word $\mathbf{w}$ and any set $\mathcal{B}$ of letters in
$\mathbf{w}$, let $\mathbf{w}_\mathcal{B}$ denote the word obtained
from $\mathbf{u}$ by retaining the letters from $\mathcal{B}$ (but
removing all others). It is easy to see that if the identity
$\mathbf{u} \approx \mathbf{v}$ is satisfied by a monoid $M$, then
any identity of the form $\mathbf{u}_\mathcal{B} \approx
\mathbf{v}_\mathcal{B}$ is also satisfied by $M$.

For any class $\mathfrak{C}$ of semigroups or monoids, let $\mathsf{var} \{\mathfrak{C}\}$ denote the semigroup variety generated by $\mathfrak{C}$. For any class $\mathfrak{C}$ of monoids, let $\mathsf{var}_{\mathbb{M}} \{\mathfrak{C}\}$ denote the monoid variety generated by $\mathfrak{C}$.
It is easy to see that a monoid $S$ is contained in the semigroup variety
of a monoid $H$ if and only if it is contained in the monoid variety of $H$.

The following small semigroups are required throughout the article:
\begin{align*}
J & =\Big\langle a,b\,\Big|\,ab=0,\,ba=a,\,b^2=b\Big\rangle, \\ %
A_0 & = \Big\langle \, a,b \,\Big|\, a^{2} = a, \, b^2 = b, \, ba = 0 \Big\rangle, \\%
B_0 & = \Big\langle \, a,b,c \,\Big|\, a^{2} = a, \, b^2= b, \, ab = ba =0, \, ac=cb=c \Big\rangle, \\%
L & = \Big\langle \, a,b \,\Big|\, a^{2} = ba = 0, \,  ab = a, \, b^2 = b \Big\rangle, \\%
R & = \Big\langle \, a,b \,\Big|\, a^{2} = ab = 0, \,  ba=a, \,  b^{2} = b\, \Big\rangle, \\%
M & = \Big\langle \, a,b,c \,\Big|\,  cb=a, \, \mbox{and all other products equal to $0$} \Big\rangle, \\%
N & = \Big\langle \, a \,\Big|\, a^{2} = 0 \Big\rangle. \\%
\end{align*}
For any non-unital semigroup $S$, let $S^1$ denote the monoid
obtained by adjoining a unit element to $S$.

\begin{proposition}
\begin{enumerate}
\item $\mathbf {A}_{0}^{1}= \mathsf{var}\{x^3\approx x^2, x^{2}yx\approx xyx \approx xyx^{2}, xyhxty \approx yxhxty, xhytxy \approx xhytyx\};$%
\item $\mathbf {B}_{0}^{1}= \mathsf{var}\{x^3\approx x^2, x^{2}yx\approx xyx \approx xyx^{2}, xyhxty \approx yxhxty, xhytxy \approx xhytyx, x^2y^2\approx y^2x^2\};$%
\item $\mathbf {L}^{1}= \mathsf{var}\{x^3\approx x^2, xyx\approx x^2y, x^2y^2 \approx y^2x^2 \};$%
\item $\mathbf {R}^{1}= \mathsf{var}\{x^3\approx x^2, xyx\approx yx^2, x^2y^2 \approx y^2x^2 \};$%
\item $\mathbf {M}^{1}= \mathsf{var}\{x^3\approx x^2, xyx\approx x^2y, xyx \approx yx^2 \};$%
\item $\mathbf {N}^{1}= \mathsf{var}\{x^3\approx x^2, xy\approx yx \};$%
\end{enumerate}
\end{proposition}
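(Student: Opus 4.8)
The plan is to prove each of the six equalities in the standard two steps. Writing $\Sigma$ for the proposed basis and $S^1$ for the corresponding monoid, I would first check \emph{soundness}, that $S^1$ satisfies every identity in $\Sigma$; this is a routine finite verification by substituting the few elements of $S^1$ for the variables, and it gives the inclusion $\mathsf{var}_{\mathbb{M}}\{S^1\} \subseteq \mathbb{V}(\Sigma)$. The substance of each statement is the reverse inclusion $\mathbb{V}(\Sigma) \subseteq \mathsf{var}_{\mathbb{M}}\{S^1\}$, equivalently the \emph{completeness} of $\Sigma$: every identity holding in $S^1$ is deducible from $\Sigma$.

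For completeness I would use a canonical-form argument. For each $\Sigma$ I would single out a set of \emph{canonical words} and show: (i) $\Sigma \vdash \mathbf{w} \approx \widehat{\mathbf{w}}$ for a unique canonical $\widehat{\mathbf{w}}$, so that $\Sigma$ rewrites every word to canonical form; and (ii) if $\mathbf{w}$ and $\mathbf{w}'$ have distinct canonical forms, then $S^1 \not\models \mathbf{w}\approx\mathbf{w}'$, i.e. some substitution into $S^1$ separates them. Granting (i) and (ii), any identity $\mathbf{w}\approx\mathbf{w}'$ satisfied by $S^1$ must have $\widehat{\mathbf{w}}=\widehat{\mathbf{w}'}$, whence $\Sigma\vdash\mathbf{w}\approx\mathbf{w}'$, which is exactly completeness. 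Throughout, the invariants preserved by the deductions — the content $\mathsf{con}(\mathbf{w})$, the partition into simple letters $\mathsf{sim}(\mathbf{w})$ and non-simple letters $\mathsf{non}(\mathbf{w})$, and, when the variety is non-commutative, the left-to-right order of the simple letters — are precisely what the canonical form must record and what the separating substitutions must detect.

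The canonical forms reflect the structure of each monoid. For $\mathbf{N}^1$ the variety is commutative with $x^3\approx x^2$, so the canonical form is $\prod_{x} x^{\min(\mathsf{m}(x,\mathbf{w}),\,2)}$ taken in a fixed order of the alphabet, and separation is immediate by sending one letter to the nilpotent generator and the rest to $1$. For $\mathbf{M}^1$ the identities force $x^2$ to be central and every repeated letter to collapse to a central square, so $\widehat{\mathbf{w}}$ is the product of the simple letters in order of appearance followed by $\prod_{x\in\mathsf{non}(\mathbf{w})}x^2$; separation then substitutes the non-simple letters to $1$ and reads off the content and the order of the simple letters from the single nonzero product $cb=a$ of $M$, while substituting a would-be simple letter to a non-identity element of square $0$ detects a mismatch in $\mathsf{sim}(\mathbf{w})$. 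For $\mathbf{L}^1$ and $\mathbf{R}^1$ the identity $xyx\approx x^2y$ (respectively $xyx\approx yx^2$) pushes the square of each non-simple letter to the front (respectively the back), and $x^2y^2\approx y^2x^2$ lets these squares be placed in a fixed order, giving $\widehat{\mathbf{w}}=\big(\prod_{x\in\mathsf{non}(\mathbf{w})}x^2\big)\,\mathbf{s}$ (respectively $\mathbf{s}\,\prod_{x\in\mathsf{non}(\mathbf{w})}x^2$) with $\mathbf{s}$ the ordered product of simple letters.

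The difficult cases are $\mathbf{A}_0^1$ and $\mathbf{B}_0^1$. Here the relations $x^2yx\approx xyx\approx xyx^2$ absorb a square on either side of a block it surrounds, while the four-variable relations $xyhxty\approx yxhxty$ and $xhytxy\approx xhytyx$ permit transposing two letters only in tightly constrained positions; since $a,b$ are idempotent, squares are \emph{not} central and cannot simply be segregated from the simple letters. Consequently the canonical form must encode the relative order of the first and last occurrences of the non-simple letters interleaved with the simple letters, and the transposition relations must be shown to generate exactly the word equivalence that $A_0^1$ (respectively $B_0^1$) fails to distinguish. I expect the main obstacle to be twofold: first, establishing the confluence and well-definedness of the rewriting in (i) for these two monoids — that every sequence of legal absorptions and transpositions terminates at the same canonical word; and second, constructing in (ii) the separating substitutions into $A_0^1$ and $B_0^1$ that expose a mismatch in this order data, since the impoverished multiplication of these small monoids (with $ba=0$, and for $B_0$ the additional commuting-square behaviour) makes it delicate to realise a prescribed pair of non-commuting occurrences without collapsing the rest of the word to $0$.
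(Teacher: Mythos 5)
The paper offers no proof of this Proposition at all: it is stated in the preliminaries as a compilation of known finite bases (the basis and subvariety lattice of $\mathbf{A}_0^1$ are taken from Lee \cite{Lee08}, and the bases for the small monoids $L^1$, $R^1$, $M^1$, $N^1$ are classical), so the comparison is between your plan and the literature's standard arguments, which do follow your soundness-plus-canonical-form template. Your treatment of $\mathbf{N}^1$ is fine, and your form for $\mathbf{M}^1$ is correct because there the identities $x^2y\approx xyx\approx yx^2$ really do make squares central.

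There are, however, two genuine gaps. First, the canonical forms you propose for $\mathbf{L}^1$ and $\mathbf{R}^1$ are wrong: squares are \emph{not} central in these varieties, and $x^2y^2\approx y^2x^2$ only commutes squares with each other, not past simple letters. Concretely, in $L^1$ substitute $x\mapsto b$ and $t\mapsto a$: then $tx^2$ evaluates to $ab^2=ab=a$ while $x^2t$ evaluates to $b^2a=ba=0$, so $L^1\not\models tx^2\approx x^2t$. Your normal form $\bigl(\prod_{x\in\mathsf{non}(\mathbf{w})}x^2\bigr)\mathbf{s}$ would identify these two words, contradicting the soundness of the proposed basis; indeed $xyx\approx x^2y$ only gathers the occurrences of $x$ into a square \emph{at the first occurrence} of $x$ (dually, at the last occurrence for $R^1$), so the correct canonical word must interleave the squares with the simple letters and may alphabetize only maximal consecutive blocks of squares. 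Second, for $\mathbf{A}_0^1$ and $\mathbf{B}_0^1$ --- the only cases with substantial content --- your proposal stops at announcing the expected obstacles (confluence of the rewriting, and construction of separating substitutions) without resolving either: no canonical form is actually defined, no termination or uniqueness argument is given, and no separating evaluations into $A_0^1$ or $B_0^1$ are exhibited. As it stands this is a plan rather than a proof, incomplete precisely where the Proposition is nontrivial, and among the cases you do carry out, two ($\mathbf{L}^1$, $\mathbf{R}^1$) rest on a refutable normal form; the shortest correct route here is simply to cite the known bases, as the paper implicitly does.
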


For any letters $x$ and $y$ of a word $\mathbf{w}$, write $x
\prec_\mathbf{w} y$ to indicate that within $\mathbf{w}$, each
occurrence of $x$ precedes every occurrence of $y$. In other words,
if $x \prec_\mathbf{w} y$ with $p=\mathsf{m}(x,\mathbf{w})$ and
$q=\mathsf{m}(y,\mathbf{w})$, then retaining only the letters $x$ and $y$
in $\mathbf{w}$ results in the word $x^p y^q$.

\begin{lemma}[{\cite[Lemma~1.3]{LeeZhang}}] \label{prelim LEM J1 words}
Suppose that $\mathbf{w} \approx \mathbf{w}'$ is any identity
satisfied by the semigroup $J^1.$ Then
\begin{enumerate}
\item $\cont (\mathbf{w}) = \cont (\mathbf{w}')$ and $\simp (\mathbf{w}) = \simp (\mathbf{w}');$ %

\item for any $x \in \cont (\mathbf{w}) = \cont (\mathbf{w}')$ and $y \in \simp (\mathbf{w}) = \simp (\mathbf{w}'),$ the conditions $x \prec_\mathbf{w} y$ and $x \prec_{\mathbf{w}'} y$ are equivalent\emph{;} %

\item $\mathbf{w}_{\simp} = \mathbf{w}_{\simp}'.$ %
\end{enumerate}
\end{lemma}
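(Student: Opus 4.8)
The plan is to derive all three statements from the internal structure of $J^1$ by evaluating $\mathbf{w}$ and $\mathbf{w}'$ under a few carefully chosen substitutions. First I would pin down the multiplication: from the defining relations $ab=0$, $ba=a$, $b^2=b$ one computes $a^2=(ba)(ba)=b(ab)a=0$, so that $J=\{0,a,b\}$ with $a^2=ab=0$, $ba=a$, $b^2=b$, and $J^1=\{1,0,a,b\}$. The two facts that do all the work are $ba=a\neq 0$ and $ab=0$: a letter sent to $a$ survives a left multiplication by $b$ but is annihilated whenever it meets a $b$ on its right. Throughout, I use that $\mathbf{w}\approx\mathbf{w}'$ being satisfied by $J^1$ forces the two sides to take equal values under every assignment of the letters to elements of $J^1$.

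For part (1), to prove $\cont(\mathbf{w})=\cont(\mathbf{w}')$ I would fix $x\in\cont(\mathbf{w})$ and apply the substitution sending $x\mapsto a$ and every other letter to $1$. Then $\mathbf{w}$ evaluates to $a^{\mathsf{m}(x,\mathbf{w})}$, which is $a$ or $0$ but never $1$ since $\mathsf{m}(x,\mathbf{w})\geq 1$, whereas if $x\notin\cont(\mathbf{w}')$ then $\mathbf{w}'$ evaluates to $1$; this contradiction gives $\cont(\mathbf{w})\subseteq\cont(\mathbf{w}')$, and equality follows by symmetry. For $\simp(\mathbf{w})=\simp(\mathbf{w}')$, given $x\in\simp(\mathbf{w})$ the same substitution sends $\mathbf{w}\mapsto a$; were $x$ non-simple in $\mathbf{w}'$, we would get $\mathbf{w}'\mapsto a^{\mathsf{m}(x,\mathbf{w}')}=0\neq a$, again impossible, so the simple letters are preserved.

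For parts (2) and (3) I would use the substitution $x\mapsto b$, $y\mapsto a$, all other letters $\mapsto 1$, where $y$ is the relevant simple letter (the case $x=y$ being vacuous, since a single occurrence never precedes itself). Because $y$ occurs exactly once, the image of $\mathbf{w}$ is $b^{i}ab^{j}$, where $i$ and $j$ count the occurrences of $x$ before and after $y$. Using $b^{k}=b$ for $k\geq 1$ together with $ba=a$ and $ab=0$, this product equals $a$ precisely when $j=0$, that is, when every occurrence of $x$ precedes $y$, and equals $0$ in every other case. Hence $x\prec_{\mathbf{w}}y$ holds if and only if the image of $\mathbf{w}$ is $a$ rather than $0$; since $\mathbf{w}$ and $\mathbf{w}'$ share this image, we get $x\prec_{\mathbf{w}}y$ iff $x\prec_{\mathbf{w}'}y$, which is (2). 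Statement (3) follows: for any two distinct simple letters, (2) shows their relative order agrees in $\mathbf{w}$ and $\mathbf{w}'$, and since $\simp(\mathbf{w})=\simp(\mathbf{w}')$ by (1), the words $\mathbf{w}_{\simp}$ and $\mathbf{w}'_{\simp}$ are sequences of the same letters in the same pairwise order, hence coincide.

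The only delicate point, and the step I would check most carefully, is the collapse of $b^{i}ab^{j}$ in part (2): one must run through the cases $i=0$ versus $i\geq 1$ and $j=0$ versus $j\geq 1$ to confirm that the value is $a$ exactly when all occurrences of $x$ lie before $y$ and $0$ otherwise (in particular $bab=(ba)b=ab=0$ when $x$ occurs on both sides of $y$). Everything else reduces to this bookkeeping together with the elementary computation $a^{2}=0$ in $J$.
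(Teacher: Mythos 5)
Your proof is correct: the computation $a^{2}=b(ab)a=0$ pins down $J^{1}=\{1,0,a,b\}$, the substitution $x\mapsto a$ (others $\mapsto 1$) gives part (1), the substitution $x\mapsto b$, $y\mapsto a$ (others $\mapsto 1$) with the case analysis $b^{i}ab^{j}=a$ iff $j=0$ gives part (2), and part (3) follows since a word whose letters each occur once is determined by its pairwise precedence relation. Note that the paper offers no proof of this statement at all --- it is imported verbatim from \cite[Lemma~1.3]{LeeZhang} --- so there is no internal argument to compare against; your direct evaluation argument is the standard one for such ``content/simple-letter/order'' lemmas and is exactly the kind of verification the cited source carries out.
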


\section{An identity basis for $\mathsf{var} \{A^1 \times B^1\}$} \label{sec basis}

The present section establishes an identities basis for the variety  $\mathsf{var} \{A^1 \times B^1\}.$
\begin{theorem}
\label{S basis theorem} The variety $\mathsf{var} \{A^1 \times B^1\}$ is defined
by the identities
\begin{gather}
x^{2} \approx x^{3} , \ \  xyx \approx x^{2}yx \approx xyx^{2}, \label{basis S xx} \\ %
xy^{2}x \approx (xy)^2 \approx (xy)^2x \approx yx^{2}y \approx (yx)^2, \label{basis S xy2x} \\ %
xytxsy \approx (xy)^{2}txsy, \ \ xt(yx)^{2}sy \approx xtyxsy, \ \ xtysxy \approx xtys(xy)^{2}, \label{basis S txsy} \\%
xy_{1}^{2} y_{2}^{2} \cdots y_{n}^{2} x \approx x y_{1}^{2} x y_{2}^{2} x \cdots y_{n}^{2} x, \qquad n=2,3,\ldots, \label{basis S xy2z2x} \\%
xytxz_{1}^{2} \cdots z_{n}^{2} y \approx yxtx z_{1}^{2} \cdots z_{n}^{2} y, \qquad n=0,1,\ldots, \label{basis S txaay} \\%
xz_{1}^{2} \cdots z_{n}^{2} ytxy \approx xz_{1}^{2} \cdots z_{n}^{2} y tyx,  \qquad n=0,1,\ldots. \label{basis S xaayt} %
\end{gather}
\end{theorem}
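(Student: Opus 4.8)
The plan is to prove the theorem in two directions, as is standard for identity basis results. Writing $\Sigma$ for the set of identities \eqref{basis S xx}--\eqref{basis S xaayt}, I must show first that $A^1 \times B^1$ satisfies every identity in $\Sigma$, and second that every identity satisfied by $A^1 \times B^1$ is deducible from $\Sigma$. The first direction is the routine one: since $A^1 \times B^1$ satisfies an identity if and only if both $A^1$ and $B^1$ do, it suffices to check each displayed identity against the two explicit $7$-element multiplication tables (adjoining $1$ to the given table for $A$, and to its dual for $B$). Because these are monoids, verifying an identity $\mathbf{u} \approx \mathbf{v}$ reduces to checking that every substitution of generators (and $0$, $1$) into both sides agrees; the $x^2 \approx x^3$ and $xyx \approx x^2yx$ relations are immediate from idempotency-type behaviour visible in the table, and the longer identities \eqref{basis S txsy}--\eqref{basis S xaayt} are verified by observing that both sides collapse to $0$ except on a controlled family of substitutions. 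This is tedious but presents no conceptual difficulty.

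The substance lies in the converse. I would let $\mathbf{u} \approx \mathbf{v}$ be an arbitrary identity satisfied by $A^1 \times B^1$ and show $\Sigma \vdash \mathbf{u} \approx \mathbf{v}$. The strategy is to use $\Sigma$ to reduce both $\mathbf{u}$ and $\mathbf{v}$ to a common normal form, and then argue that the invariants preserved by $A^1 \times B^1$ force the two normal forms to coincide. First I would establish the basic consequences of $\Sigma$: using \eqref{basis S xx} every letter may be assumed to occur with multiplicity at most $2$, so each word has a well-defined set $\simp(\mathbf{w})$ of simple letters and set $\cont(\mathbf{w}) \setminus \simp(\mathbf{w})$ of doubled letters. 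Since $A^1$ and $B^1$ each contain a copy of $J^1$ as a divisor (or at least satisfy the relevant content- and order-preserving laws), Lemma~\ref{prelim LEM J1 words} applies and yields $\cont(\mathbf{u}) = \cont(\mathbf{v})$, $\simp(\mathbf{u}) = \simp(\mathbf{v})$, $\mathbf{u}_{\simp} = \mathbf{v}_{\simp}$, and the equivalence of the relations $x \prec y$ for $x$ arbitrary and $y$ simple.

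With these invariants secured, the normal-form argument proceeds by showing how each relation in $\Sigma$ lets one rearrange the non-simple letters. The identities \eqref{basis S xy2x} let me standardize adjacent or nested occurrences of a single doubled letter, effectively packaging each doubled letter into a block $y^2$; identities \eqref{basis S xy2z2x} let me migrate the ``separating'' occurrences of a letter $x$ through a string of squared letters; and \eqref{basis S txsy}--\eqref{basis S xaayt} are exactly the laws needed to commute or merge a pair of doubled letters $x, y$ past each other in the presence of a separating simple letter $t$, which is where the finer combinatorial structure of $A^1 \times B^1$ is encoded. The plan is to define a canonical order on the non-simple letters relative to the fixed skeleton $\mathbf{u}_{\simp} = \mathbf{v}_{\simp}$, and then show that any word is $\Sigma$-equivalent to the one in which the non-simple letters are inserted in this canonical order. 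The main obstacle, and the step I expect to require the most care, is proving that the reduction to normal form is well-defined and confluent: I must verify that whenever two doubled letters could be reordered in more than one way, the relations \eqref{basis S txsy}--\eqref{basis S xaayt} together with \eqref{basis S xy2z2x} force the same outcome, so that no identity satisfied by $A^1 \times B^1$ escapes deducibility from $\Sigma$. Establishing that these commutation laws suffice — that there is no additional ``hidden'' identity of $A^1 \times B^1$ not already captured — will be the crux, and I anticipate it will hinge on a careful case analysis driven by the separating patterns $xtxsy$, $xt(yx)^2sy$, and their duals.
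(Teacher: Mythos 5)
Your first direction and the broad outline of the second (reduce both sides to a normal form via $\Sigma$, then show invariants force the normal forms to agree) match the paper. But there is a genuine gap at exactly the point you flag as the crux, and the mechanism you propose there would not work. First, your intended normal form is wrong: you cannot ``define a canonical order on the non-simple letters'' and reorder them into it, because the relative order of doubled letters is a semantic invariant of $A^1\times B^1$. For instance, setting $x=e$, $y=d$ in $A^1$ gives $e^2d^2=c$ while $ed^2e=0$, so $x^2y^2\approx xy^2x$ fails (identity \eqref{id xxyy} of Lemma~\ref{lem: non-id}); hence no global reordering of squared letters is available, and the paper's canonical form correspondingly alphabetizes letters only \emph{inside} a single square (via \eqref{basis S txaay}, \eqref{basis S xaayt}), while the left-to-right sequence of squares, their overlaps, and their interaction with the simple skeleton are rigid data, conditions (CF2a)--(CF2d) and (CF3).

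Second, and more fundamentally, confluence of the rewriting system is the wrong target. Confluence would only give each word a unique $\Sigma$-normal form; what the theorem needs is that two \emph{different} words in normal form that agree on $A^1\times B^1$ are literally equal, i.e.\ that no ``hidden'' identity exists --- and you offer no method to establish this. The invariants you do secure from Lemma~\ref{prelim LEM J1 words} (content, simple letters, $\mathbf{w}_{\simp}$, and $x\prec y$ for $y$ \emph{simple}) say nothing about the order of two non-simple letters within one block, which is precisely what must be controlled. The paper's mechanism is a contrapositive substitution-and-deletion argument: assuming the canonical forms differ, say $x\prec_{\mathbf{w}_k}y$ but $x\not\prec_{\mathbf{w}_k'}y$, one deletes all but two or three carefully chosen letters to derive one of the short identities \eqref{id xyyxt}, \eqref{id txyyx}, \eqref{id xtxsx}, \eqref{id xxyy}, each of which is refuted in $A^1$ or $B^1$ by an explicit substitution (Lemma~\ref{lem: non-id}, e.g.\ $x=e$, $y=d$, $t=b$ gives $ed^2be=a\neq 0=ed^2ebe$). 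This case analysis (Lemmas~\ref{z con}, \ref{z equivalent}, \ref{zi=zi}, with Case~2 split according to where $x$ recurs among the later blocks $\mathbf{w}_{k+1},\ldots,\mathbf{w}_n$) is the actual heart of the proof; without identifying these forbidden identities and the deletion argument that produces them, your plan cannot close the converse direction.
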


\noindent The proof of Theorem~\ref{S basis theorem} is given at
the end of the section.

Most of the equational deductions in this article are deductions
within the equational theory of $\mathsf{var} \{A^1 \times B^1\}$. Therefore, it
will be convenient to refer to the identities in Theorem~\ref{S
basis theorem} collectively by $\circledS$, that is,
\[
\circledS = \{ (\ref{basis S xx}), (\ref{basis S xy2x}), (\ref{basis S txsy}), (\ref{basis S xy2z2x}),
(\ref{basis S txaay}), (\ref{basis S xaayt}) \}.
\]
For any sets $\Pi_{1}$ and $\Pi_{2}$ of identities, the deduction
$\circledS \cup \Pi_{1} \vdash \Pi_{2}$ is abbreviated to $\Pi_{1}
\Vdash \Pi_{2}$.

For any nonempty set $Z = \{ z_1,\ldots,z_r \}$ of letters, the word of the form
\[
(z_1 \cdots z_r)^2, %
\]
is said to be the \textit{$Z$-square},
in particular, if $z_1, \ldots, z_r$ are in alphabetical order, then it said to be the
\textit{perfect $Z$-square}.
More generally, a \textit{square} (resp. \textit{perfect square}) is a $Z$-square (resp. perfect $Z$-square)
for some nonempty set $Z$ of letters.

\begin{lemma}\label{lem: perfect square}
Let $\mathbf{z}$ be any square. Then the identities~$\circledS$ imply the identity
\begin{equation}
\mathbf{z} \approx \overline{\mathbf{z}}, \label{perfect square}
\end{equation}
where $\overline{\mathbf{z}}$ is the perfect $\mathsf{con}(\mathbf{z})$-square.
\end{lemma}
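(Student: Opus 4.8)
The plan is to show that the identities $\circledS$ permit an arbitrary reordering of the letters inside a square, and then to sort them alphabetically. Since every permutation is a product of transpositions of cyclically adjacent entries, it suffices to prove that for any letters $a,b$ and any (possibly empty) words $\mathbf{p},\mathbf{q}$ the identity
\[
(\mathbf{p}\,ab\,\mathbf{q})^2 \approx (\mathbf{p}\,ba\,\mathbf{q})^2
\]
is deducible from $\circledS$; iterating such adjacent swaps (a bubble sort) then rewrites the given square $\mathbf{z}$ into the perfect $\cont(\mathbf{z})$-square $\overline{\mathbf{z}}$. The only ingredient I will need is~(\ref{basis S xy2x}), used through two of its substitution instances: writing $X,Y$ for arbitrary words and substituting $x\mapsto X$, $y\mapsto Y$ into $xy^2x\approx (xy)^2\approx (yx)^2$ yields a \emph{folding} relation $(XY)^2 \approx XY^2X$ and a \emph{block rotation} relation $(XY)^2 \approx (YX)^2$.

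First I would record block rotation: with $X=\mathbf{u}$, $Y=\mathbf{v}$ it gives $(\mathbf{u}\mathbf{v})^2 \approx (\mathbf{v}\mathbf{u})^2$, so any square may be cyclically rotated at will, and in particular any chosen adjacent pair of letters can be moved to the end of the word. Next I would establish the \emph{swap of the last two letters}, $(\mathbf{w}\,ab)^2 \approx (\mathbf{w}\,ba)^2$, by the chain
\[
(\mathbf{w}\,ab)^2 \approx \mathbf{w}\,(ab)^2\,\mathbf{w} \approx \mathbf{w}\,(ba)^2\,\mathbf{w} \approx (\mathbf{w}\,ba)^2 ,
\]
where the outer two steps are instances of folding (with $X=\mathbf{w}$ and $Y=ab$, respectively $Y=ba$) and the middle step replaces the subword $(ab)^2$ by $(ba)^2$ using block rotation with $X=a$, $Y=b$. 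Combining these, the general adjacent swap follows by rotating $ab$ to the end, swapping there, and rotating back: $(\mathbf{p}\,ab\,\mathbf{q})^2 \approx (\mathbf{q}\mathbf{p}\,ab)^2 \approx (\mathbf{q}\mathbf{p}\,ba)^2 \approx (\mathbf{p}\,ba\,\mathbf{q})^2$.

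The step I expect to be the main obstacle, or at least the one requiring the decisive idea, is the swap of the last two letters. A naive attempt to transpose $a$ and $b$ inside $(\mathbf{w}ab)^2=\mathbf{w}ab\mathbf{w}ab$ founders on the fact that $a$ and $b$ each occur twice, once per period, so flipping only one occurrence yields a non-square word that cannot be repaired by rotation. The folding identity $(\mathbf{w}ab)^2\approx \mathbf{w}(ab)^2\mathbf{w}$ is what resolves this: it merges the two periods so that the two occurrences of the block $ab$ coalesce into the single subword $(ab)^2$, whereupon one application of $(ab)^2\approx(ba)^2$ flips both occurrences at once. To finish I would note that every operation above carries a square to a square, and that transpositions of adjacent entries generate the full symmetric group on $\cont(\mathbf{z})$; applying a sequence of such operations that sorts the letters alphabetically transforms $\mathbf{z}$ into $\overline{\mathbf{z}}$, which establishes~(\ref{perfect square}). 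I remark that this argument uses only~(\ref{basis S xy2x}) and none of the more delicate identities~(\ref{basis S txaay}),~(\ref{basis S xaayt}).
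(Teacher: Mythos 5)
Your proof is correct, and it takes a genuinely different route from the paper's. The paper performs the adjacent transposition in place, one period at a time: the $n=0$ instances $xytxy \approx yxtxy$ of \eqref{basis S txaay} and $xytxy \approx xytyx$ of \eqref{basis S xaayt} flip first the two leading occurrences and then the two trailing occurrences of the pair $z_iz_{i+1}$, the shape $xy\,t\,xy$ of these identities being exactly what keeps the intermediate non-square word under control; no rotations are needed. You instead dispose of the obstacle you correctly identify (flipping one occurrence alone destroys squareness) by folding with $(XY)^2 \approx XY^2X$ so that both occurrences of $ab$ coalesce into $(ab)^2$, flipping them simultaneously via $(ab)^2 \approx (ba)^2$, and unfolding, with block rotation $(XY)^2 \approx (YX)^2$ used to transport the pair to the end and back --- all of these being substitution instances of the single line \eqref{basis S xy2x}. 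Your argument therefore proves slightly more: the conclusion of Lemma~\ref{lem: perfect square} holds in any semigroup satisfying \eqref{basis S xy2x} alone, whereas the paper's derivation draws on \eqref{basis S txaay} and \eqref{basis S xaayt}. Conversely, the paper's in-place swap is the $n=0$ shadow of how \eqref{basis S txaay} and \eqref{basis S xaayt} are later applied (with $n \geq 1$) in the canonical-form reduction of Lemma~\ref{canonical word}, so it better motivates those members of the basis. One small point to patch in your write-up: since these are semigroup deductions, the empty word may not be substituted for a variable, so the degenerate cases (empty $\mathbf{w}$, $\mathbf{p}$ or $\mathbf{q}$, i.e.\ the square $(ab)^2$ itself, and likewise $r=2$) should be flagged as direct instances of $(xy)^2 \approx (yx)^2$ rather than as outputs of folding or rotation; this is a one-line fix, not a gap.
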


\begin{proof}
Without loss of generality, we may assume that $\mathbf{z}=(z_1 \cdots z_r)^2$.
Then
\begin{align*}
\mathbf{z} & \makebox[0.42in]{$=$}  z_1 \cdots z_iz_{i+1} \cdots z_r z_1 \cdots z_{i}z_{i+1} \cdots z_r  \\ %
& \makebox[0.42in]{$\stackrel{\eqref{basis S txaay}}{\approx}$}  z_1 \cdots z_{i+1}z_{i} \cdots z_r z_1 \cdots z_{i}z_{i+1} \cdots z_r  \\ %
& \makebox[0.42in]{$\stackrel{\eqref{basis S xaayt}}{\approx}$}  z_1 \cdots z_{i+1}z_{i} \cdots z_r z_1 \cdots z_{i+1}z_{i} \cdots z_r   \\ %
& \makebox[0.42in]{$=$} (z_1 \cdots z_{i+1}z_{i} \cdots z_r)^{2} .%
\end{align*}
Hence the identities ~$\circledS$ can be used to permute any letters within $\mathbf{z}$ in any manner. Specifically, the identities ~$\circledS$ can be used to permute any letters within $\mathbf{z}$ into alphabetical order, whence $\mathbf{z} \stackrel{\circledS}{\approx} \overline{\mathbf{z}}$.
\end{proof}

\begin{lemma}\label{lem: perfect square reduce}
Let $\mathbf{z}$ and $\mathbf{z}'$ be any squares with $\mathsf{con}(\mathbf{z}') \subseteq \mathsf{con}(\mathbf{z})$. Then the identities~$\circledS$ imply the identity
\begin{equation}
\mathbf{z}'\mathbf{z} \approx \mathbf{z} \approx \mathbf{z}\mathbf{z}'. \label{perfect square reduce}
\end{equation}
\end{lemma}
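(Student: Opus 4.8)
The plan is to reduce the statement to a single-letter absorption fact and then iterate. By Lemma~\ref{lem: perfect square} every square is $\circledS$-equivalent to the perfect square on the same content; hence any two squares of equal content are interchangeable under $\circledS$, a fact I will use repeatedly to rearrange the letters of $\mathbf{z}$. The key step I would isolate is the claim that for any square $\mathbf{z}$ and any letter $a \in \mathsf{con}(\mathbf{z})$ one has $\circledS \vdash \mathbf{z}a \approx \mathbf{z}$ and $\circledS \vdash a\mathbf{z} \approx \mathbf{z}$.

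For the right-hand absorption I would use Lemma~\ref{lem: perfect square} to rewrite $\mathbf{z}$ as $(au)^2$, with $a$ brought to the front and $u$ the product of the remaining content letters in any order; then the instance $x \mapsto a$, $y \mapsto u$ of \eqref{basis S xy2x} gives $(au)^2 \approx (au)^2 a$, that is, $\mathbf{z} \approx \mathbf{z}a$. For the left-hand absorption I would instead bring $a$ to the end, writing $\mathbf{z} \approx (va)^2$, so that $a\mathbf{z} = avava$; applying \eqref{basis S xy2x} with $x \mapsto a$, $y \mapsto v$ yields $(av)^2 \approx (av)^2 a = avava$, and since $(av)^2$ has content $\mathsf{con}(\mathbf{z})$ one last use of Lemma~\ref{lem: perfect square} gives $a\mathbf{z} \approx (av)^2 \approx \mathbf{z}$. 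The degenerate case $\mathsf{con}(\mathbf{z}) = \{a\}$, in which $u$ and $v$ are empty, is covered directly by $a^3 \approx a^2$ from \eqref{basis S xx}.

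Granting the claim, the lemma follows by a straightforward iteration. Writing $\mathbf{z}' = c_1 c_2 \cdots c_k$ as the product of its letters, the hypothesis $\mathsf{con}(\mathbf{z}') \subseteq \mathsf{con}(\mathbf{z})$ guarantees $c_i \in \mathsf{con}(\mathbf{z})$ for every $i$, so the claim applies to each $c_i$; this is precisely where the content condition is used. For $\mathbf{z}\mathbf{z}' \approx \mathbf{z}$ I would erase the letters of $\mathbf{z}'$ from the left, repeatedly replacing the prefix $\mathbf{z}c_i$ by $\mathbf{z}$ until only $\mathbf{z}$ remains, and for $\mathbf{z}'\mathbf{z} \approx \mathbf{z}$ I would symmetrically erase $c_k, \ldots, c_1$ from the right using $c_i\mathbf{z} \approx \mathbf{z}$. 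I expect the only genuine subtlety to be the left-hand absorption $a\mathbf{z} \approx \mathbf{z}$: unlike its right-hand counterpart it is not a direct instance of \eqref{basis S xy2x}, and is recovered only after renormalising $avava$ back to a perfect square via Lemma~\ref{lem: perfect square}. Everything else is mechanical bookkeeping.
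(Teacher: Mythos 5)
Your proposal is correct, and it follows the same overall strategy as the paper: reduce the lemma to absorbing a single letter $a \in \mathsf{con}(\mathbf{z})$ into the square $\mathbf{z}$, then peel off the letters of $\mathbf{z}'$ one at a time (which, as you note, never uses squareness of $\mathbf{z}'$, only $\mathsf{con}(\mathbf{z}') \subseteq \mathsf{con}(\mathbf{z})$ --- the paper's iteration has the same feature). Where you differ is in how the single-letter step is executed. The paper keeps $\mathbf{z} = (z_1 \cdots z_i z z_{i+1} \cdots z_r)^2$ fixed, with $z$ sitting wherever it happens to sit, and works in place: it duplicates the prefix via $xyx \approx x^2yx$ from \eqref{basis S xx} to get $z(z_1\cdots z_i)^2 z \cdots$, merges $z u^2 z$ into $(uz)^2$ via \eqref{basis S xy2x}, and deletes the extra period again via \eqref{basis S xx}; Lemma~\ref{lem: perfect square} is never invoked, and the second absorption $\mathbf{z}\mathbf{z}' \approx \mathbf{z}$ is dismissed by symmetry (legitimately, since $\circledS$ is closed under duality up to derivation). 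You instead use Lemma~\ref{lem: perfect square} as the workhorse to renormalise $\mathbf{z}$ so that $a$ sits at the boundary, apply the absorption instance $(xy)^2 \approx (xy)^2x$ of \eqref{basis S xy2x} directly, and renormalise back; you also treat both sides explicitly rather than appealing to duality. Your route is more modular and makes the role of \eqref{basis S xy2x} transparent (you correctly flag that the left absorption is not a direct instance and needs the renormalisation $avava = (av)^2a \approx (av)^2 \approx \mathbf{z}$, and you correctly dispatch the singleton case via $a^3 \approx a^2$); the paper's route is self-contained within $\circledS$ and shorter per step, at the cost of a less conceptual in-place calculation.
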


\begin{proof}
By symmetry$,$ it suffices to prove that $\mathbf{z}'\mathbf{z} \approx \mathbf{z}.$ Without loss of generality, we may assume that $\mathsf{t}(\mathbf{z}')=z$, and $\mathbf{z}=(z_1 \cdots z_i z z_{i+1} \cdots z_{r})^2$.
Then
\begin{align*}
z\mathbf{z} & \makebox[0.42in]{$=$} zz_1 \cdots z_iz z_{i+1} \cdots z_r z_1 \cdots z_iz z_{i+1} \cdots z_r  \\ %
& \makebox[0.42in]{$\stackrel{\eqref{basis S xx}}{\approx}$}  z(z_1 \cdots z_i)^2z z_{i+1} \cdots z_r z_1 \cdots z_iz z_{i+1} \cdots z_r   \\ %
& \makebox[0.42in]{$\stackrel{\eqref{basis S xy2x}}{\approx}$} (z_1 \cdots z_iz)^2 z_{i+1} \cdots z_r z_1 \cdots z_iz z_{i+1} \cdots z_r   \\ %
& \makebox[0.42in]{$\stackrel{\eqref{basis S xx}}{\approx}$}   z_1 \cdots z_iz z_{i+1} \cdots z_r z_1 \cdots z_iz z_{i+1} \cdots z_r   \\ %
& \makebox[0.42in]{$=$} \mathbf{z} .%
\end{align*}
It is easily seen how this procedure can be repeated so that the word $\mathbf{z}'\mathbf{z}$ can be  converted to $\mathbf{z}.$
\end{proof}

\begin{lemma}\label{lem: perfect square product}
Let $\mathbf{w}$ be any non-simple word such that $\mathsf{sim}(\mathbf{w}) = \emptyset$. Then the identities~$\circledS$ imply the identity $\mathbf{w} \approx \overline{\mathbf{w}},$ where
\[
\overline{\mathbf{w}}=\mathbf{z}_1 \cdots \mathbf{z}_{p_k}
\]
where
\begin{enumerate}
\item the words $\mathbf{z}_1^{(k)}, \ldots, \mathbf{z}_{p_k}^{(k)}\in \mathcal{X}^{+}$ are perfect squares;%
\item if $\cont(\mathbf{z}_\ell^{(k)}) \cap \cont(\mathbf{z}_g^{(k)}) \ne \emptyset$ for some $\ell<g$ and $\ell, g\in \{1, \ldots, p_k\}$, then $\cont(\mathbf{z}_\ell^{(k)}) \cap \cont(\mathbf{z}_g^{(k)}) \subseteq \cont(\mathbf{z}_j^{(k)})$ for each $\ell\leq j\leq g$;%
\item $\cont(\mathbf{z}_\ell^{(k)}) \nsubseteq \cont(\mathbf{z}_g^{(k)})$ for each $\ell \ne g$ and $\ell, g\in \{1, \ldots, p_k\}$; %
\end{enumerate}
\end{lemma}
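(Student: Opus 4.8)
The plan is to proceed by induction on the length $\abs{\mathbf{w}}$ and to reach the normal form in three conceptual passes, producing conditions (1), (2) and (3) in turn. The tools are Lemma~\ref{lem: perfect square} (which replaces an arbitrary square by its perfect version), Lemma~\ref{lem: perfect square reduce} (which absorbs a square into an adjacent square of larger content), and the two structural identities \eqref{basis S xy2x} and \eqref{basis S xy2z2x}. The first of these will be used in both directions, since $xy^2x \approx (xy)^2$ lets me expand a perfect square $(xy)^2$ into the bracketed block $xy^2x$ and contract it back; the second, read from right to left, merges a run of blocks sharing a common outer letter into a single block of the form $x(\cdots)x$.

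In the first pass I would show $\mathbf{w} \stackrel{\circledS}{\approx} \mathbf{z}_1 \cdots \mathbf{z}_q$ for some perfect squares $\mathbf{z}_1, \ldots, \mathbf{z}_q$, establishing (1). Fixing the leftmost letter, collapsing surplus powers by \eqref{basis S xx}, and repeatedly creating squares out of bracketed factors by \eqref{basis S xy2x} (then perfecting them by Lemma~\ref{lem: perfect square}), I would peel a perfect square off the front of $\mathbf{w}$ and apply the induction hypothesis to the shorter remainder. Because $\mathsf{sim}(\mathbf{w}) = \emptyset$, every letter recurs, so no step is obstructed by a letter that occurs only once.

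The second pass, which enforces the interval condition (2), is where the real work lies. If some letter $x$ lies in $\mathbf{z}_i$ and $\mathbf{z}_j$ ($i<j$) but is absent from an intermediate square, I would expand the squares lying between the first and last occurrence of $x$ into $x$-bracketed blocks by \eqref{basis S xy2x} and then merge them by \eqref{basis S xy2z2x}, producing a single factor of the form $x\mathbf{u}x$ in which $x$ now spans a contiguous block; applying the induction hypothesis to $\mathbf{u}$ re-expresses this factor as a product of perfect squares. Crucially, this reorganisation preserves the left-to-right order of the squares and never transposes two squares with disjoint content, since the variety does \emph{not} satisfy $x^2y^2 \approx y^2x^2$ (already $A^1$ fails it, as $d^2e^2 = 0 \ne c = e^2d^2$), so any strategy that relies on commuting disjoint squares cannot work. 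Once (2) holds, the third pass delivers the antichain condition (3) almost for free: if $\mathsf{con}(\mathbf{z}_i) \subseteq \mathsf{con}(\mathbf{z}_j)$, the interval property forces every letter of $\mathbf{z}_i$ to appear in each square between $\mathbf{z}_i$ and $\mathbf{z}_j$, so in particular $\mathsf{con}(\mathbf{z}_i) \subseteq \mathsf{con}(\mathbf{z}_{i+1})$ and Lemma~\ref{lem: perfect square reduce} deletes $\mathbf{z}_i$ against its neighbour; deleting a single square leaves every support a contiguous block, so (2) is retained.

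The main obstacle, and the part requiring the most care, is the termination and non-interference of these passes. Merging in the second pass does not shorten the word, so I would control it by a separate monovariant, for instance the total number of pairs $(x,k)$ for which $x$ occurs in squares on both sides of $\mathbf{z}_k$ but not in $\mathbf{z}_k$ itself; each merge strictly decreases this quantity, while the absorptions of the third pass strictly decrease the number of squares. The delicate point is to verify that a merge carried out on behalf of one letter does not open a fresh gap for another, and that the recursion on the inner factor $\mathbf{u}$ is genuinely on a shorter word, so that the lexicographic combination of these measures is well-founded.
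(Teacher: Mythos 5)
Your proposal takes essentially the same route as the paper's proof: square every letter occurrence via \eqref{basis S xx} and perfect the resulting squares via Lemma~\ref{lem: perfect square} to get condition (1), repair the interval condition (2) by bracketing a shared letter with \eqref{basis S xy2x} and redistributing it across the intermediate squares with \eqref{basis S xy2z2x}, and obtain the antichain condition (3) by absorbing a square into its neighbour of larger content via Lemma~\ref{lem: perfect square reduce}. The only differences are organisational: the paper applies \eqref{basis S xy2z2x} left-to-right to insert the shared letter into each intermediate square directly (so the word stays a product of perfect squares throughout, with no recursion on an inner factor $\mathbf{u}$ needed), and your explicit gap-counting monovariant merely makes precise the termination the paper leaves implicit in ``by repeating the above processes.''
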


\begin{proof}
Since each non-simple letter $x$ in $\mathbf{w}$ can be replaced by its square $x^2$ by applying the identities \eqref{basis S xx},
the word $\mathbf{w}$ can be written into the form of
\[
\mathbf{w}_k=\mathbf{z}_1 \cdots \mathbf{z}_p,
\]
where the words $\mathbf{z}_1, \ldots, \mathbf{z}_p \in \mathcal{X}^{+}$ are squares. Then by Lemma~\ref{lem: perfect square} we may assume that the words $\mathbf{z}_1, \ldots, \mathbf{z}_p$
are perfect squares. Hence the condition (CF2a) is satisfied.

If $z\in \cont(\mathbf{z}_\ell) \cap \cont(\mathbf{z}_g)$
for some $1 \leq \ell < g \leq p$, then
\begin{align*}
\mathbf{w} & \makebox[0.42in]{$=$}  \cdots \mathbf{z}_\ell \mathbf{z}_{\ell+1} \cdots \mathbf{z}_g \cdots  \\ %
& \makebox[0.42in]{$\stackrel{\eqref{perfect square reduce}}{\approx}$}  \cdots z^2\mathbf{z}_\ell \mathbf{z}_{\ell+1} \cdots \mathbf{z}_g z^2\cdots  \\ %
& \makebox[0.42in]{$\stackrel{\eqref{basis S xy2z2x}}{\approx}$}  \cdots z^2\mathbf{z}_\ell z \mathbf{z}_{\ell+1} z  \cdots z \mathbf{z}_g z^2 \cdots   \\ %
& \makebox[0.42in]{$\stackrel{\eqref{basis S xx}}{\approx}$}  \cdots z^2(z\mathbf{z}_\ell z)(z\mathbf{z}_{\ell+1} z)  \cdots (z \mathbf{z}_g z) z^2 \cdots  \\ %
& \makebox[0.42in]{$\stackrel{\eqref{perfect square}}{\approx}$} \cdots z^2 \overline{\mathbf{z}}_\ell \overline{\mathbf{z}}_{\ell+1} \cdots \overline{\mathbf{z}}_g z^2 \cdots \\ %
& \makebox[0.42in]{$\stackrel{\eqref{perfect square reduce}}{\approx}$} \cdots  \overline{\mathbf{z}}_\ell \overline{\mathbf{z}}_{\ell+1} \cdots \overline{\mathbf{z}}_g \cdots %
\end{align*}
where $\overline{\mathbf{z}}_j$ is the perfect $\mathsf{con}(\mathbf{z}_j)\cup \{z\}$-square for each $j= \ell, \ldots, g$. Now if $\cont(\mathbf{z}_\ell) \cap \cont(\mathbf{z}_g) = \emptyset$, then by repeating the above processes, each letter in $\cont(\mathbf{z}_\ell) \cap \cont(\mathbf{z}_g)$ can be put into $\mathbf{z}_j$ for each $\ell \leq j \leq g$. Hence we may assume that $\cont(\mathbf{z}_\ell) \cap \cont(\mathbf{z}_g) \subseteq \cont(\mathbf{z}_j)$, and so the condition (CF2b) is satisfied.

Suppose that $\cont(\mathbf{z}_\ell^{(k)}) \subseteq \cont(\mathbf{z}_g^{(k)})$ for some $\ell < g$.
Then $\cont(\mathbf{z}_\ell^{(k)}) \subseteq \cont(\mathbf{z}_{\ell+1}^{(k)})$ by the condition (CF2b). Hence by applying the identities \eqref{basis S xx} and \eqref{basis S xy2x}, the identity $\mathbf{z}_{\ell}^{(k)}\mathbf{z}_{\ell+1}^{(k)} \approx \mathbf{z}_{\ell+1}^{(k)}$ is hold, and so $\mathbf{z}_{\ell}^{(k)}$ can be deleted from $\mathbf{w}_k$.
Hence we may assume that the condition (CF2c) is satisfied.
\end{proof}

A word $\mathbf{w}$ is
said to be in \textit{canonical form} if
\begin{equation}
\mathbf{w} = \mathbf{w}_0 \prod_{i=1}^n (\mathbf{s}_i \mathbf{w}_i) \label{canon form}
\end{equation}
for some $n \geq 0$ such that the following conditions are all satisfied:
\begin{enumerate}
\item[(I)] the letters of $\mathbf{s}_1,\ldots,\mathbf{s}_n \in \mathcal{X}^+$ are simple in $\mathbf{w}$; %

\item[(II)] $\mathbf{w}_0, \mathbf{w}_n \in \mathcal{X}^{*}$ and $\mathbf{w}_1, \ldots, \mathbf{w}_{n-1} \in \mathcal{X}^{+}$ and for each $k=0, 1, \ldots, n$,
    \[
    \mathbf{w}_k=\mathbf{z}_1^{(k)} \cdots \mathbf{z}_{p_k}^{(k)}
    \]
    where
\begin{enumerate}
\item the words $\mathbf{z}_1^{(k)}, \ldots, \mathbf{z}_{p_k}^{(k)}\in \mathcal{X}^{+}$ are perfect squares;%
\item if $\cont(\mathbf{z}_\ell^{(k)}) \cap \cont(\mathbf{z}_g^{(k)}) \ne \emptyset$ for some $\ell<g$ and $\ell, g\in \{1, \ldots, p_k\}$, then $\cont(\mathbf{z}_\ell^{(k)}) \cap \cont(\mathbf{z}_g^{(k)}) \subseteq \cont(\mathbf{z}_j^{(k)})$ for each $\ell\leq j\leq g$;%
\item $\cont(\mathbf{z}_\ell^{(k)}) \nsubseteq \cont(\mathbf{z}_g^{(k)})$ for each $\ell \ne g$ and $\ell, g\in \{1, \ldots, p_k\}$; %
\item if $x\in \cont(\mathbf{z}_{\ell-1}^{(k)}) \setminus \cont(\mathbf{z}_\ell^{(k)})$ and $y\in \cont(\mathbf{z}_\ell^{(k)}) \setminus \cont(\mathbf{z}_{\ell-1}^{(k)})$ for some $k\in\{0, 1, \ldots, n\}$ and $\ell\in \{1,\ldots, p_k\}$, then $x, y$ satisfy neither of the following conditions:
\begin{enumerate}
\item $x \in \cont(\mathbf{w}_g)$ and $y \in \cont(\mathbf{w}_h)$ for some $g<h<k$ or $k<g<h$ or $h<k<g $;
\item $x, y \in \cont(\mathbf{w}_g)$ for some $g \ne k$.
\end{enumerate}
\end{enumerate}
\end{enumerate} %

An identity $\mathbf{u} \approx \mathbf{v}$ is \textit{canonical}
if the words $\mathbf{u}$ and $\mathbf{v}$ are in canonical form.

\begin{lemma} \label{canonical word}
Let $\mathbf{w}$ be any word\emph{.} Then there exists
some word $\overline{\mathbf{w}}$ in canonical form such that the
identities~$\circledS$ imply the identity $\mathbf{w}
\approx \overline{\mathbf{w}}.$
\end{lemma}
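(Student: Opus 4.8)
The plan is to rewrite an arbitrary word $\mathbf{w}$ into canonical form by a four-stage procedure, each stage working modulo the identities $\circledS$. Throughout I would rely on the fact that $\circledS$ preserves both $\cont(\mathbf{w})$ and the partition of $\cont(\mathbf{w})$ into simple and non-simple letters: a direct inspection shows that on each identity in $\circledS$ every letter lies in the same multiplicity class (absent, simple, or non-simple) on the two sides, and this class is stable under substitution, so no rewriting step can turn a simple letter non-simple or vice versa. The first stage needs no identities at all: reading $\mathbf{w}$ from left to right and grouping maximal runs produces the \emph{natural factorization} $\mathbf{w} = \mathbf{w}_0 \prod_{i=1}^{n}(\mathbf{s}_i \mathbf{w}_i)$, in which each $\mathbf{s}_i$ is a maximal run of simple letters and each $\mathbf{w}_k$ is a (possibly empty, for $k\in\{0,n\}$) run of non-simple letters. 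This already secures condition~(I) and the shape~\eqref{canon form}.

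The second stage makes every block genuinely square-able. Because a non-simple letter may have its occurrences distributed among several blocks, a given $\mathbf{w}_k$ need not be a word with $\simp(\mathbf{w}_k)=\emptyset$, so Lemma~\ref{lem: perfect square product} is not yet applicable to it. To repair this I would use \eqref{basis S xx}: if $x\in\cont(\mathbf{w}_k)$ occurs only once inside $\mathbf{w}_k$, then, since $x$ is non-simple, it has a neighbouring occurrence in $\mathbf{w}$ on at least one side, and applying $x\mathbf{y}x\approx x^2\mathbf{y}x$ (or $x\mathbf{y}x\approx x\mathbf{y}x^2$) with $\mathbf{y}$ the intervening word doubles that occurrence without touching the factorization. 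Iterating over all such letters turns every block into a word all of whose letters are non-simple within it. I would then apply Lemma~\ref{lem: perfect square product} block by block (the identities act in context, so rewriting a block rewrites $\mathbf{w}$), rewriting each $\mathbf{w}_k$ as a product of perfect squares $\mathbf{z}_1^{(k)}\cdots\mathbf{z}_{p_k}^{(k)}$ satisfying (CF2a), (CF2b) and (CF2c).

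It remains to secure (CF2d), and this is where the genuine work lies. At this point $\mathbf{w}$ has shape~\eqref{canon form} with each block internally normalized, but the \emph{order} of adjacent squares within a block is not yet determined by the global data. Whenever a pair of private letters $x\in\cont(\mathbf{z}_{\ell-1}^{(k)})\setminus\cont(\mathbf{z}_\ell^{(k)})$ and $y\in\cont(\mathbf{z}_\ell^{(k)})\setminus\cont(\mathbf{z}_{\ell-1}^{(k)})$ falls into one of the forbidden configurations~(i) or~(ii) of (CF2d), I would interchange them using the transfer identities \eqref{basis S txaay} and \eqref{basis S xaayt} (the very identities already used in Lemma~\ref{lem: perfect square}), in which an intervening simple block plays the role of the separating letter $t$, together with \eqref{basis S txsy} and \eqref{basis S xy2z2x} to move a shared letter across that simple block. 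The three cyclic position patterns in~(i) and the pattern in~(ii) are designed to be exactly the situations in which these identities permit the relative order of $x$ and $y$ inside $\mathbf{w}_k$ to be reversed; each interchange transposes $x$ and $y$ between the two adjacent squares, after which I would re-apply Lemmas~\ref{lem: perfect square} and~\ref{lem: perfect square reduce} to re-normalize any square that has gained or lost a letter and thereby restore (CF2a)--(CF2c).

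The main obstacle is precisely the interplay between the four conditions in stage four, and above all a termination argument: repairing one violation of (CF2d) must not regenerate violations without bound. I would control this by attaching a well-founded measure to each word of shape~\eqref{canon form} --- for instance the number of ordered pairs $(x,y)$ of private letters sitting in a forbidden configuration, refined lexicographically by the positions of the squares that carry them --- and by checking that every admissible interchange strictly lowers this measure while leaving (CF2a)--(CF2c) intact. Since $\cont(\mathbf{w})$ and $\simp(\mathbf{w})$ are fixed throughout, the measure ranges over a well-ordered set, so the rewriting halts, and the terminal word satisfies all of (I), (II) and (CF2a)--(CF2d), hence is in canonical form. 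Verifying that the elementary interchanges really are instances of \eqref{basis S txaay}--\eqref{basis S xaayt} in each of the cyclic cases, and that the chosen measure genuinely decreases, is the most delicate bookkeeping and is where I expect the proof to demand the most care.
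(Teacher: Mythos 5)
Your stages one through three are essentially the paper's own argument: the natural factorization into maximal simple and non-simple runs gives condition (I); squaring every occurrence of a non-simple letter via \eqref{basis S xx} turns each block $\mathbf{w}_k$ into a product of squares (your doubling trick for a letter occurring only once inside a block, using $xyx \approx x^2yx$ against an occurrence in another block, is exactly the justification the paper glosses over); and the computations behind Lemmas \ref{lem: perfect square}, \ref{lem: perfect square reduce} and \ref{lem: perfect square product} then secure (II)(a)--(c), just as in the paper.

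The genuine gap is your stage four. The paper never interchanges $x$ and $y$: when a private pair $x\in\cont(\mathbf{z}_{\ell-1}^{(k)})\setminus\cont(\mathbf{z}_{\ell}^{(k)})$, $y\in\cont(\mathbf{z}_{\ell}^{(k)})\setminus\cont(\mathbf{z}_{\ell-1}^{(k)})$ falls into a forbidden configuration, it \emph{inserts} a perfect $\{x,y\}$-square between the two squares, via $\mathbf{z}_{\ell-1}^{(k)}\mathbf{z}_{\ell}^{(k)} \approx (\mathbf{z}_{\ell-1}^{(k)}x)(y\mathbf{z}_{\ell}^{(k)}) \approx \mathbf{z}_{\ell-1}^{(k)}(xy)^2\mathbf{z}_{\ell}^{(k)}$, using \eqref{basis S xy2x} and then \eqref{basis S txsy} in the order patterns of condition (i), or \eqref{basis S txaay} and \eqref{basis S xaayt} in condition (ii); after the insertion $x$ and $y$ are no longer private letters of adjacent squares, so (II)(d) holds with no reordering at all. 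Your transposition plan fails on both fronts. In configuration (ii) the condition ``$x,y\in\cont(\mathbf{w}_g)$ for some $g\ne k$'' is symmetric under exchanging $x$ and $y$, so reversing their order inside $\mathbf{w}_k$ leaves the pair in exactly the same forbidden configuration: your proposed measure does not decrease and the procedure loops forever. In configuration (i) the transposition is not derivable from $\circledS$ at all: the transfer identities \eqref{basis S txaay} and \eqref{basis S xaayt} require the outside occurrences of $x$ and $y$ to be separated only by a product of squares, which is precisely what fails when a simple block intervenes (that is what the three order patterns in (i) encode), while the identities \eqref{basis S txsy} only create or delete a factor $(xy)^2$ and never reverse an order. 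Worse, a derivable reversal would contradict the rest of the paper: for instance $x^2y^2txsy$ normalizes by insertion and the (II)(c) deletions to $(xy)^2txsy$, while $y^2x^2txsy$ is itself already canonical, and the uniqueness of canonical forms established by Lemmas \ref{z equivalent} and \ref{zi=zi} (resting on the non-identities of Lemma \ref{lem: non-id}) shows these two canonical words are not equivalent in $\mathsf{var}\{A^1\times B^1\}$; since $\circledS$ is sound for this variety, your interchange step cannot be an $\circledS$-derivation. So stage four must be replaced by the insertion argument, after which no termination measure of your kind is needed (each insertion removes a violating private pair by making it shared, and any private pairs at the new junctions were already private pairs at the old one).
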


\begin{proof}
It suffices to convert the word $\mathbf{w}$, using the
identities~$\circledS$, into a word in canonical form.
It is easy to see that the word $\mathbf{w}$ can be written into the form of
\begin{equation}
\mathbf{w} = \mathbf{w}_0 \prod_{i=1}^n (\mathbf{s}_i \mathbf{w}_i)
\end{equation}
where $\mathbf{s}_1,\ldots, \mathbf{s}_n \in \mathcal{X}^{+}$, $\mathbf{w}_0, \mathbf{w}_n \in \mathcal{X}^{*}$,
$\mathbf{w}_1,\ldots, \mathbf{w}_{n-1} \in \mathcal{X}^{+}$, the letters of $\mathbf{s}_1,\ldots, \mathbf{s}_n$
are simple in $\mathbf{w}$ and the letters of $\mathbf{w}_0,\ldots, \mathbf{w}_{n}$ are non-simple in  $\mathbf{w}$. Hence the condition (CF1) is satisfied.

Since each non-simple letter $x_j$ in $\mathbf{w}_k$ can be replaced by its square $x_j^2$ by applying the identities \eqref{basis S xx},
the word $\mathbf{w}_k$ can be written into the form of
\[
\mathbf{w}_k=\mathbf{z}_1^{(k)} \cdots \mathbf{z}_{p_k}^{(k)},
\]
where the words $\mathbf{z}_1^{(k)}, \ldots, \mathbf{z}_{p_k}^{(k)} \in \mathcal{X}^{+}$ are squares.
The identities \eqref{basis S txaay} and \eqref{basis S xaayt} can be applied to alphabetically order the letters in each of
$\mathbf{z}_1^{(k)}, \ldots, \mathbf{z}_{p_k}^{(k)}$, and so we may assume that the words $\mathbf{z}_1^{(k)}, \ldots, \mathbf{z}_{p_k}^{(k)}$
are perfect squares. Hence the condition (CF2a) is satisfied.

If $z\in \cont(\mathbf{z}_\ell^{(k)}) \cap \cont(\mathbf{z}_g^{(k)})$
for some $\ell < g$ and $\ell, g\in \{1, \ldots, p_k\}$, then
\begin{align*}
\mathbf{w}_k & \makebox[0.42in]{$=$}  \cdots \mathbf{z}_\ell^{(k)} \mathbf{z}_{l+1}^{(k)} \cdots \mathbf{z}_g^{(k)} \cdots \\ %
& \makebox[0.42in]{$\stackrel{\eqref{basis S xy2x}}{\approx}$}  \cdots (z\mathbf{z}_\ell^{(k)}) \mathbf{z}_{\ell+1}^{(k)} \cdots (\mathbf{z}_g^{(k)} z)\cdots  \\ %
& \makebox[0.42in]{$\stackrel{\eqref{basis S xy2z2x}}{\approx}$}  \cdots z\mathbf{z}_\ell^{(k)} z \mathbf{z}_{\ell+1}^{(k)}  \cdots z \mathbf{z}_k^{(k)} z \cdots   \\ %
& \makebox[0.42in]{$\stackrel{\eqref{basis S xx}}{\approx}$}  \cdots (z\mathbf{z}_\ell^{(k)} z)(z\mathbf{z}_{\ell+1}^{(k)}z)  \cdots (z \mathbf{z}_g^{(k)} z) \cdots  \\ %
& \makebox[0.42in]{$\stackrel{\eqref{basis S xy2x}}{\approx}$} \cdots (z\mathbf{z}_\ell^{(k)})^2(z\mathbf{z}_{\ell+1}^{(k)})^{2} \cdots (z\mathbf{z}_g^{(k)})^2 \cdots .%
\end{align*}
Hence for each $\ell \leq j \leq g$, the perfect square $\mathbf{z}_j^{(k)}$ can be replaced by the perfect square $\overline{\mathbf{z}}_j^{(k)}$ where $\cont(\mathbf{z}_j^{(k)})\cup \{z\}=\cont(\overline{\mathbf{z}}_j^{(k)})$. Now if $\cont(\mathbf{z}_\ell^{(k)}) \cap \cont(\mathbf{z}_g^{(k)}) = \emptyset$, then by repeating the above processes, each letter in $\cont(\mathbf{z}_\ell^{(k)}) \cap \cont(\mathbf{z}_g^{(k)})$ can be put into $\mathbf{z}_j^{(k)}$ for each $\ell \leq j \leq g$. Hence we may assume that $\cont(\mathbf{z}_\ell^{(k)}) \cap \cont(\mathbf{z}_g^{(k)}) \subseteq \cont(\mathbf{z}_j^{(k)})$, and so the condition (CF2b) is satisfied.

Suppose that $\cont(\mathbf{z}_\ell^{(k)}) \subseteq \cont(\mathbf{z}_g^{(k)})$ for some $\ell < g$.
Then $\cont(\mathbf{z}_\ell^{(k)}) \subseteq \cont(\mathbf{z}_{\ell+1}^{(k)})$ by the condition (CF2b). Hence by applying the identities \eqref{basis S xx} and \eqref{basis S xy2x}, the identity $\mathbf{z}_{\ell}^{(k)}\mathbf{z}_{\ell+1}^{(k)} \approx \mathbf{z}_{\ell+1}^{(k)}$ is hold, and so $\mathbf{z}_{\ell}^{(k)}$ can be deleted from $\mathbf{w}_k$.
Hence we may assume that the condition (CF2c) is satisfied.

Let $x\in \cont(\mathbf{z}_{\ell-1}^{(k)}) \setminus \cont(\mathbf{z}_\ell^{(k)})$ and $y\in \cont(\mathbf{z}_\ell^{(k)}) \setminus \cont(\mathbf{z}_{\ell-1}^{(k)})$ for some $k\in\{0, 1, \ldots, n\}$ and $\ell\in \{1,\ldots, p_k\}$. Suppose that
$x, y$ satisfy one of the conditions (a), (b) and (c) in (CF3). Then since
\[
\cdots \mathbf{z}_{\ell-1}^{(k)} \mathbf{z}_{l}^{(k)} \cdots  \stackrel{\eqref{basis S xy2x}}{\approx} \cdots (\mathbf{z}_{\ell-1}^{(k)}x) (y\mathbf{z}_{l}^{(k)}) \cdots \stackrel{(\dag)}{\approx} \cdots \mathbf{z}_{\ell-1}^{(k)} (xy)^2\mathbf{z}_{l}^{(k)}) \cdots
\]
where $(\dag) = \eqref{basis S txsy}$ if $x, y$ satisfy the condition (a) or (b), and  $(\dag) = \{\eqref{basis S txaay}, \eqref{basis S xaayt}\}$ if $x, y$ satisfy the condition (c).  Hence in these cases, we may assume that there is a perfect square for the set $\{x, y\}$ between $\mathbf{z}_{\ell-1}^{(k)}$ and $\mathbf{z}_\ell^{(k)}$, and so the condition (CF3) is satisfied.
\end{proof}

%

\begin{lemma} \label{lem: non-id}
The variety $\mathsf{var} \{A^1 \times B^1\}$ does not satisfy the following identities
\begin{align}
xy^{2}tx & \approx  xy^{2}xtx, \label{id xyyxt} \\[0.04in] %
xty^{2}x & \approx xtxy^{2}x, \label{id txyyx} \\[0.04in] %
xsxtx  &\approx xstx, \label{id xtxsx}\\[0.04in] %
xxyy &\approx xy^2x \label{id xxyy}.
\end{align}
\end{lemma}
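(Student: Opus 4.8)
The plan is to use that $\mathsf{var}\{A^1\times B^1\}$ is generated by the single monoid $A^1\times B^1$, so that it satisfies an identity $\mathbf u\approx\mathbf v$ if and only if $A^1\times B^1$ does, and a direct product satisfies $\mathbf u\approx\mathbf v$ exactly when both factors do. Since the coordinate projections exhibit $A^1$ and $B^1$ as homomorphic images of $A^1\times B^1$, both lie in the variety. Hence to prove that the variety violates each of \eqref{id xyyxt}--\eqref{id xxyy} it suffices to produce, for each of them, one substitution of its variables into $A^1$ or into $B^1$ under which the two sides receive different values. No deduction is required: every failure is certified by a single evaluation in the $6$-element table of $A$ (or of its dual).

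I would first dispose of \eqref{id xxyy}. Reading $x\mapsto e$, $y\mapsto d$ in $A^1$, the left side $eedd$ equals $c$ because $ee=e$, $ed=c$, $cd=c$, while the right side $edde$ equals $0$ because $edd=c$ but $ce=0$. The same two products $ed=c$ and $ce=0$ govern the remaining cases: the monoid $A^1$ registers whether an $e$ is read to the left or to the right of a $c$ or a $d$, and this is precisely the asymmetry that the perturbed identities fail to preserve.

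For \eqref{id xyyxt} I would take $x\mapsto e$, $y\mapsto d$, $t\mapsto b$ in $A^1$: the left side $eddbe$ evaluates to $a$ via $ed=c$, $cd=c$, $cb=a$, $ae=a$, whereas on the right side the extra $e$ creates the factor $ce=0$, so $eddebe=0$. For \eqref{id xtxsx} I would take $x\mapsto e$, $s\mapsto c$, $t\mapsto b$ in $A^1$: here the central $e$ of $ecebe$ forces $ce=0$ and the left side vanishes, while the right side $ecbe$ equals $a$ through $ec=c$, $cb=a$, $ae=a$. Finally, \eqref{id txyyx} is the mirror image of \eqref{id xyyxt}: reversing every word carries $xty^2x\approx xtxy^2x$ to $xy^2tx\approx xy^2xtx$. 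As $B^1$ is the dual of $A^1$, the failure of \eqref{id xyyxt} in $A^1$ immediately yields the failure of \eqref{id txyyx} in $B^1$ under the reversed substitution.

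The argument is wholly computational, so no genuine obstacle arises; the only nonroutine point is locating witnesses, which I would organize around the asymmetry $ec=c$, $ed=c$ against $ce=de=0$, supplemented by $cb=a$ and $cd=c$. Once the four substitutions above are checked against the multiplication table of $A$ and that of its dual $B$, the lemma follows.
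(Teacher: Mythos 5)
Your proof is correct and takes essentially the same approach as the paper: the identical witness substitutions ($x\mapsto e$, $y\mapsto d$, $t\mapsto b$ for \eqref{id xyyxt}; $x\mapsto e$, $s\mapsto c$, $t\mapsto b$ for \eqref{id xtxsx}; $x\mapsto e$, $y\mapsto d$ for \eqref{id xxyy}), together with the same duality argument transferring the failure of \eqref{id xyyxt} in $A^1$ to the failure of \eqref{id txyyx} in $B^1$. Incidentally, your evaluation $ecbe=a$ corrects a typo in the paper's proof, which misprints the right side of \eqref{id xtxsx} as $0$ (and writes $d^2e^2$ where $ed^2e$ is meant in the computation for \eqref{id xxyy}), though the paper's conclusions are unaffected.
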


\begin{proof}
Let $x=e$, $y=d$ and $t=b$ in $A^1$. Then the left side of \eqref{id xyyxt} is $ed^2be =
a$, but the right side of \eqref{id xyyxt} is $ed^2ebe =0$, and so $A^1$ does not satisfy the identity \eqref{id xyyxt}. By a dual argument we may show that $B^{1}$ does not satisfy the identity \eqref{id txyyx}.

Let $x=e$, $s=c$ and $t=b$ in $A^1$. Then the left side of \eqref{id xtxsx} is $ecebe =
0$, but the right side of \eqref{id xtxsx} is $ecbe =0$, and so $A^1$ does not satisfy the identity \eqref{id xtxsx}.

Let $x=e$ and $y=d$ in $A^1$. Then the left side of \eqref{id xxyy} is $e^2d^2 =
c$, but the right side of \eqref{id xxyy} is $d^2e^2 =0$, and so $A^1$ does not satisfy the identity \eqref{id xxyy}.
\end{proof}

For any word $\mathbf{w}$, let $\mathsf{F_{SS}} (\mathbf{w})$ denote
the set of factors of $\mathbf{w}$ of length two that are formed by
simple letters:
\[
\mathsf{F_{SS}} (\mathbf{w}) = \{ xy \in \mathcal{X}^2 \mid \mathbf{w} \in \mathcal{X}^* xy \mathcal{X}^*, \, x,y \in \mathsf{sim} (\mathbf{w}) \}. %
\]
For example, if $\mathbf{w} = x^3 abcyxdy^2 efx$, then
$\mathsf{F_{SS}} (\mathbf{w}) = \{ ab, bc, ef \}$.

\begin{lemma} \label{sim}
Suppose that $\mathbf{w} \approx \mathbf{w}'$ is any identity
satisfied by the semigroup $S.$ Then
\begin{enumerate}
\item $\cont (\mathbf{w}) = \cont (\mathbf{w}')$ and $\simp (\mathbf{w}) = \simp (\mathbf{w}');$ %
\item $\mathsf{F_{SS}} (\mathbf{w}) = \mathsf{F_{SS}} (\mathbf{w}');$
\item $\mathbf{w}_{\simp} = \mathbf{w}_{\simp}'.$ %
\end{enumerate}
\end{lemma}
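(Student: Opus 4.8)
The plan is to establish all three claims by evaluating $\mathbf{w}$ and $\mathbf{w}'$ under carefully chosen substitutions into the monoid $A^1$. Since $A^1$ is a homomorphic image of $S = A^1\times B^1$ under the first projection, every identity satisfied by $S$ is satisfied by $A^1$; hence whenever one of (1)--(3) fails it suffices to exhibit a substitution under which $A^1$ separates the two sides, contradicting $\mathbf{w}\approx\mathbf{w}'$. The only features of $A^1$ I will use are: the element $a$ is nilpotent with $a^2=0$; the element $e$ is idempotent with $ea=ae=a$; and the products $ec=c$, $ce=0$, $cb=a$, $ceb=0$.

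For (1), suppose first that some letter $x$ lies in $\cont(\mathbf{w})\setminus\cont(\mathbf{w}')$. Substituting $x\mapsto 0$ and every other letter $\mapsto 1$ sends $\mathbf{w}$ to $0$ and $\mathbf{w}'$ to the empty product $1$, a contradiction; by symmetry $\cont(\mathbf{w})=\cont(\mathbf{w}')$. Next, if $x\in\simp(\mathbf{w})$ while $\mathsf{m}(x,\mathbf{w}')\ge 2$, substitute $x\mapsto a$ and every other letter $\mapsto 1$; then $\mathbf{w}\mapsto a$ but $\mathbf{w}'\mapsto a^{\mathsf{m}(x,\mathbf{w}')}=0$, again a contradiction. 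Combined with the reverse inclusion this gives $\simp(\mathbf{w})=\simp(\mathbf{w}')$.

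I would prove (3) before (2), since (2) relies on it. Take any two simple letters $x,y$ and substitute $x\mapsto e$, $y\mapsto c$, and every other letter $\mapsto 1$. Each side collapses to a word containing one $x$ and one $y$, evaluating to $ec=c$ or $ce=0$ according to the relative order of $x$ and $y$; as these values differ, $x$ and $y$ must occur in the same order in $\mathbf{w}$ and in $\mathbf{w}'$. Since by (1) the two words share the same set of simple letters and every pair occurs in the same order, $\mathbf{w}_{\simp}=\mathbf{w}'_{\simp}$.

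The main obstacle is (2), because detecting that two simple letters are \emph{adjacent}, rather than merely occurring in a fixed order, requires a substitution sensitive to the insertion of a letter between them. Suppose $xy\in\mathsf{F_{SS}}(\mathbf{w})\setminus\mathsf{F_{SS}}(\mathbf{w}')$. By (1) the letters $x,y$ are simple in $\mathbf{w}'$ as well, and by (3) they remain consecutive in $\mathbf{w}'_{\simp}$; hence no simple letter lies between them in $\mathbf{w}'$, so $xy$ fails to be a factor of $\mathbf{w}'$ only if at least one non-simple letter occurs between $x$ and $y$ in $\mathbf{w}'$. Now substitute $x\mapsto c$, $y\mapsto b$, and every other letter $\mapsto e$. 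Because $x,y$ are adjacent in $\mathbf{w}$, its image contains the factor $cb=a$ flanked only by copies of $e$, and $ea=ae=a$ forces $\mathbf{w}\mapsto a\ne 0$; in $\mathbf{w}'$ the non-empty block of non-simple letters between $x$ and $y$ all map to $e$, producing the factor $c\,e\,b=0$, so $\mathbf{w}'\mapsto 0$. This contradiction, together with the symmetric argument, yields $\mathsf{F_{SS}}(\mathbf{w})=\mathsf{F_{SS}}(\mathbf{w}')$. The delicate point throughout part (2) is precisely this use of the idempotent $e$ as a transparent filler that nonetheless annihilates $cb$ once inserted, which is what lets the substitution see adjacency; the reductions supplied by (1) and (3) are exactly what guarantee that any failure of adjacency in $\mathbf{w}'$ is caused purely by non-simple letters, all of which the substitution sends to $e$.
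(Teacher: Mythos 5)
Your proof is correct, but it takes a genuinely different route from the paper's, which is essentially a two-line citation: the paper derives (1) and (3) from its Lemma~2.2 (Lemma~1.3 of Lee--Zhang on identities of $J^1$), observing that the subsemigroup $\{0,1,b,d\}$ of $A^1$ is isomorphic to $J^1$, and derives (2) from Lemma~1.10 of Lee--Zhang together with the fact (its Lemma~3.5) that $\mathsf{var}\{A^1\times B^1\}$ fails the identity $xsxtx \approx xstx$. You instead give a self-contained substitution argument entirely inside $A^1$ (legitimately, via the first projection), and all your computations check against the multiplication table: $a^2=0$, $ea=ae=a$, $ec=c$, $ce=0$, $cb=a$, so the $0$-versus-$1$ and $a$-versus-$0$ tests give (1), the $e,c$ test gives (3), and the ordering (1), (3), (2) is exactly what makes (2) work, since (3) guarantees that any failure of adjacency of $xy$ in $\mathbf{w}'$ is due solely to intervening non-simple letters, all sent to the transparent idempotent $e$, yielding $c e b = 0$ against $cb = a$. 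It is worth noting that your substitution $x\mapsto c$, $y\mapsto b$, rest $\mapsto e$ is precisely the witness the paper uses to refute $xsxtx \approx xstx$ (where, incidentally, the paper's displayed values contain a typo: $ecbe = a$, not $0$), so in effect you have unpacked the relevant special case of the cited Lemma~1.10 rather than invoking it. What the paper's approach buys is brevity and reuse of established machinery; what yours buys is a proof readable without Lee--Zhang in hand, and an explicit exhibition of which structural features of $A^1$ (the nilpotent $a$, the idempotent filler $e$, and the adjacency-sensitive product $cb$) are responsible for each invariant.
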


\begin{proof}
(1) and (3) follow from Lemma~\ref{prelim LEM J1 words} since the subsemigroup $\{0, 1, b, d\}$ of $A^1$ is isomorphic to $J^1$.

(2) follows from Lemma~1.10 of \cite{LeeZhang} since the variety $\mathsf{var} \{A^1 \times B^1\}$ does not satisfy the identity \eqref{id xtxsx}.
\end{proof}

For the remainder of this section, suppose that $\mathbf{w}
\approx \mathbf{w}'$ is any identity satisfied by the variety $\mathsf{var} \{A^1 \times B^1\}$, where the words
\begin{align}\label{w=w'}
\mathbf{w} = \mathbf{w}_0 \prod_{i=1}^n (\mathbf{s}_i \mathbf{w}_i) \quad \text{and} \quad \mathbf{w}' = \mathbf{w}_0' \prod_{i=1}^{n'} (\mathbf{s}_i' \mathbf{w}_i') %
\end{align}
are in canonical form. It follows from Lemma~\ref{sim} that $n=n'$ and $\mathbf{s}_k =\mathbf{s}_k'$  for each $k=0, \ldots, n$. The remainder of this section is devoted to the verification $\mathbf{w}_k = \mathbf{w}_k'$ for each $k=0, \ldots, n$.

\begin{lemma}\label{z con}
$\cont(\mathbf{w}_k)= \cont(\mathbf{w}_k')$.
\end{lemma}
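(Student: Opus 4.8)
The plan is to locate every non-simple letter relative to the simple skeleton $\mathbf{s}_1,\ldots,\mathbf{s}_n$ and then detect, block by block, whether a given non-simple letter occurs there. As recorded just before the lemma, Lemma~\ref{sim} gives $n=n'$ and $\mathbf{s}_k=\mathbf{s}_k'$ for all $k$, so the two words share the same arrangement of simple letters and only the non-simple blocks remain to be compared. Since $A^1$ and $B^1$ are quotients of $A^1\times B^1$, both $J^1$ (a subsemigroup of $A^1$) and the dual $(J^1)^{\mathrm{op}}$ (a subsemigroup of $B^1$) lie in $\mathsf{var}\{A^1\times B^1\}$; hence $\mathbf{w}\approx\mathbf{w}'$ is satisfied by $J^1$ and, after reversal, by $(J^1)^{\mathrm{op}}$. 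Applying Lemma~\ref{prelim LEM J1 words}(2) to each, I obtain that for every non-simple letter $x$ and every simple letter $s$, the relations $x\prec x$-to-$s$ and $s$-to-$x$, that is $x\prec_{\mathbf{w}}s$ and $s\prec_{\mathbf{w}}x$, hold in $\mathbf{w}$ if and only if they hold in $\mathbf{w}'$.

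Next I would encode first/last occurrences through these preserved relations. For a non-simple letter $x$, let $k_1(x)$ and $k_2(x)$ be the indices of the blocks containing, respectively, the first and the last occurrence of $x$. Since $k_1(x)=\max\{\,j:\mathbf{s}_j\prec_{\mathbf{w}}x\,\}$ and $k_2(x)=\min\{\,j:x\prec_{\mathbf{w}}\mathbf{s}_j\,\}-1$ (with the evident conventions $k_1(x)=0$ and $k_2(x)=n$ at the ends) are expressed purely by the $\prec$-relations between $x$ and the simple letters, and as $\mathbf{s}_j=\mathbf{s}_j'$, one gets $k_1(x)=k_1'(x)$ and $k_2(x)=k_2'(x)$. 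Consequently $x\in\cont(\mathbf{w}_k)\Leftrightarrow x\in\cont(\mathbf{w}_k')$ whenever $k\le k_1(x)$ or $k\ge k_2(x)$: in either case $x$ is absent from both $k$-th blocks, or $k$ is an endpoint of the interval $[k_1(x),k_2(x)]$ and $x$ is present in both. In particular this already settles the two extreme blocks $k=0$ and $k=n$, and it leaves only the genuinely interior situation $k_1(x)<k<k_2(x)$ with $1\le k\le n-1$.

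For such a $k$ I would detect the presence of $x$ in $\mathbf{w}_k$ by deleting letters. Choosing $s\in\mathbf{s}_k$ and $t\in\mathbf{s}_{k+1}$ and restricting $\mathbf{w}\approx\mathbf{w}'$ to $\mathcal{B}=\{x,s,t\}$, I obtain an identity again satisfied by $A^1\times B^1$ of the shape $x^{a}sx^{b}tx^{c}\approx x^{a'}sx^{b'}tx^{c'}$, where $b$ (resp. $b'$) counts the occurrences of $x$ strictly between $s$ and $t$, so that $x\in\cont(\mathbf{w}_k)\Leftrightarrow b\ge1$. The hypotheses $k_1(x)<k<k_2(x)$ and $k_1(x)=k_1'(x)$, $k_2(x)=k_2'(x)$ force $a,c,a',c'\ge1$. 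Suppose, for contradiction, $b\ge1$ and $b'=0$. Using only \eqref{basis S xx} one reduces $x^{a}sx^{b}tx^{c}\stackrel{\circledS}{\approx}xsxtx$ and $x^{a'}stx^{c'}\stackrel{\circledS}{\approx}xstx$: first collapse every exponent to at most $2$ by $x^3\approx x^2$, then erase the surviving squares adjacent to the flanking occurrences of $x$ via $x^2yx\approx xyx\approx xyx^2$. Since $A^1\times B^1$ satisfies both $\circledS$ (Theorem~\ref{S basis theorem}) and the restricted identity, transitivity yields $A^1\times B^1\models xsxtx\approx xstx$, which is exactly \eqref{id xtxsx}, contradicting Lemma~\ref{lem: non-id}. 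Hence $b\ge1\Leftrightarrow b'\ge1$, and $\cont(\mathbf{w}_k)=\cont(\mathbf{w}_k')$ follows in every case.

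The main obstacle is precisely this interior case. The first/last-occurrence data preserved through $J^1$ and its dual only confine each non-simple letter to the interval of blocks $[k_1(x),k_2(x)]$; by themselves these relations cannot rule out a letter being present at both ends of its interval yet absent from an intermediate block, since $\prec$ with a single simple letter is a global condition on all occurrences of $x$. The device that breaks this deadlock is the deletion of all but the three letters $x,s,t$, which localises the question to one block boundary, together with the failure of \eqref{id xtxsx}, the one identity that genuinely separates $xsxtx$ from $xstx$ inside the variety.
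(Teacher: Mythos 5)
Your proof is correct, and its decisive device---deleting all letters except $\{x,s,t\}$ with $s\in\cont(\mathbf{s}_k)$ and $t\in\cont(\mathbf{s}_{k+1})$, collapsing the powers of $x$ by \eqref{basis S xx}, and contradicting Lemma~\ref{lem: non-id} via the failure of \eqref{id xtxsx}---is exactly the paper's. Where you diverge is in handling the boundary and the occurrences of $x$ outside the $s\cdots t$ window. The paper dispenses with your entire first/last-occurrence analysis: it passes to $a\mathbf{w}b\approx a\mathbf{w}'b$ for fresh letters $a,b$, so that a simple letter to the left and to the right of $\mathbf{w}_k$ always exists ($s=a$ when $k=0$ --- the paper's ``$k=1$'' is a typo --- and $t=b$ when $k=n$), and it sandwiches the restricted words between flanking $x$'s, computing
\[
xsxtx \stackrel{\eqref{basis S xx}}{\approx} x\bigl(a\mathbf{w}b\bigr)_{\{s,t,x\}}x \approx x\bigl(a\mathbf{w}'b\bigr)_{\{s,t,x\}}x \stackrel{\eqref{basis S xx}}{\approx} xstx,
\]
where the padding guarantees at least one $x$ on each side of the window no matter where the actual occurrences of $x$ lie, so one uniform derivation covers every $k$ at once. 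Your route reaches the same contradiction at the cost of extra machinery --- $J^1$ and $(J^1)^{\mathrm{op}}$ via Lemma~\ref{prelim LEM J1 words}(2), the indices $k_1(x),k_2(x)$, and the three-way case split $k\le k_1$, $k\ge k_2$, $k_1<k<k_2$ --- all of which the padding renders unnecessary; on the other hand, your observation that the preserved $\prec$-relations with simple letters already pin down the endpoint blocks is sound and self-contained, and your case split correctly guarantees $a,c,a',c'\ge 1$ where you need it. One presentational caution: when you invoke Theorem~\ref{S basis theorem} to assert $A^1\times B^1\models\circledS$, cite only its routine soundness half (direct verification on $A^1$ and $B^1$), since Lemma~\ref{z con} is itself an ingredient of the completeness half; the paper's own derivation here uses $\circledS$ in the same sound-only way.
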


\begin{proof}
Suppose that $x\in \cont(\mathbf{w}_k)\setminus \cont(\mathbf{w}_k')$. Since the variety $\mathsf{var} \{A^1 \times B^1\}$ satisfies the identity $\mathbf{w}
\approx \mathbf{w}'$, it is easy to see that the variety $\mathsf{var} \{A^1 \times B^1\}$ satisfies the identity $a\mathbf{w}b
\approx a\mathbf{w}'b$ where $a \ne b$ and $a, b \not\in \cont(\mathbf{w})=\cont(\mathbf{w}')$.
Then
\[
xsxtx \stackrel{\eqref{basis S xx}} \approx x(a\mathbf{w}b)_{\{s, t, x\}}x \approx x(a\mathbf{w}'b)_{\{s, t, x\}}x \stackrel{\eqref{basis S xx}} \approx xstx,
\]
where $s\in \cont(\mathbf{s}_k)$ if $k\geq 1$ and $s=a$ if $k=1$, and $t\in \cont(\mathbf{s}_{k+1})$ if $k< n$ and $t = b$ if $k=n$.
But this implies that the semigroup $S$ satisfies the identity \eqref{id xtxsx}, contradicting Lemma~\ref{lem: non-id}. Hence $\cont(\mathbf{w}_k)= \cont(\mathbf{w}_k')$.
\end{proof}

\begin{lemma}\label{z equivalent}
If for each $x,y \in \cont(\mathbf{w}_k)= \cont(\mathbf{w}_k')$, the condition
\begin{align}\label{equivalent}
x \prec_{\mathbf{w}_k} y \quad \mbox{if and only if}\quad x \prec_{\mathbf{w}_k'} y
\end{align}
is satisfied by the identity $\mathbf{w} \approx \mathbf{w}'$, then $\mathbf{w}_k=\mathbf{w}_k'$.
\end{lemma}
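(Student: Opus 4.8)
The plan is to show that a word in canonical form is completely determined by its content together with the precedence relation $\prec$; since Lemma~\ref{z con} gives $\cont(\mathbf{w}_k)=\cont(\mathbf{w}_k')$ and the hypothesis \eqref{equivalent} says that the restrictions of $\prec$ to $\cont(\mathbf{w}_k)$ and $\cont(\mathbf{w}_k')$ coincide, this at once yields $\mathbf{w}_k=\mathbf{w}_k'$. Write $\mathbf{w}_k=\mathbf{z}_1^{(k)}\cdots\mathbf{z}_{p_k}^{(k)}$ and set $C_j=\cont(\mathbf{z}_j^{(k)})$. By condition (CF2b) the set of indices $j$ with $z\in C_j$ is an interval, say $[a(z),b(z)]$, and by (CF2c) the sets $C_j$ are nonempty, pairwise distinct, and form an antichain under inclusion. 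The first step is to record the dictionary between $\prec$ and this interval data: two distinct letters $x,y$ are $\prec$-incomparable exactly when their intervals overlap (equivalently, when some square contains both, in which case they interleave as $xyxy$ inside that square), while $x\prec_{\mathbf{w}_k}y$ holds exactly when $b(x)<a(y)$. Verifying this equivalence is routine, but it is the backbone of everything that follows.

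Next I would recover the (unordered) collection of squares. Consider the incomparability graph $G$ on $\cont(\mathbf{w}_k)$ whose edges are the $\prec$-incomparable pairs; by the dictionary this is the intersection graph of the intervals $[a(z),b(z)]$. By the Helly property of intervals on a line, every clique of $G$ consists of intervals sharing a common point and so is contained in some $C_j$; conversely each $C_j$ is itself a clique. The antichain condition (CF2c) then forces each $C_j$ to be a \emph{maximal} clique and forces distinct indices to give distinct sets, so that $\{C_1,\dots,C_{p_k}\}$ is exactly the set of maximal cliques of $G$. Since $G$ is built solely from $\prec$, this already shows $p_k=p_k'$ and that $\mathbf{w}_k$ and $\mathbf{w}_k'$ involve the same collection of square-contents.

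It remains to pin down the order of the squares. Define $C_i\mathrel{\lhd}C_j$ to mean that some letter of $C_i$ precedes some letter of $C_j$ under $\prec$. Using the dictionary one checks that if $i<j$ then no letter of $C_j$ can precede a letter of $C_i$: a letter $u\in C_j$ satisfies $b(u)\ge j$ while a letter $v\in C_i$ satisfies $a(v)\le i<j$, so $b(u)>a(v)$ and hence $u\not\prec v$; thus $\lhd$ is antisymmetric. Conversely, for $i<j$ one may pick $x\in C_i\setminus C_{i+1}$ and $y\in C_j\setminus C_{j-1}$ (both available by (CF2c)), whence $b(x)=i<j=a(y)$ and so $C_i\lhd C_j$. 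Therefore $\lhd$ is precisely the linear order of the indices, and it is visibly determined by $\prec$. Consequently the sequence $C_1,\dots,C_{p_k}$, and hence $\mathbf{w}_k$ itself — each $\mathbf{z}_j^{(k)}$ being the perfect square on $C_j$ by (CF2a) — is determined by $\cont(\mathbf{w}_k)$ and $\prec_{\mathbf{w}_k}$ alone, giving $\mathbf{w}_k=\mathbf{w}_k'$.

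The main obstacle is this last ordering step, and it is exactly where the antichain condition (CF2c) is indispensable: it is needed both to identify the $C_j$ with the maximal cliques of $G$ (so that the multiset of square-contents is intrinsic) and to furnish the witnesses $x\in C_i\setminus C_{i+1}$, $y\in C_j\setminus C_{j-1}$ that make $\lhd$ a consistent total order. Without the non-containment condition the same incomparability graph could arise from genuinely different canonical words, so the bulk of the care in writing this up will go into justifying the clique description and the totality of $\lhd$, while the interval dictionary of the first paragraph is the only place where the internal $xyxy$ structure of a square is actually used.
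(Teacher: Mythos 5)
Your proof is correct, and its overall skeleton coincides with the paper's: first show that $\mathbf{w}_k$ and $\mathbf{w}_k'$ are products of the same collection of perfect squares, then show the squares must appear in the same order. The ordering step is essentially identical in both arguments — the paper also produces witnesses $x\in \cont(\mathbf{z}_i)\setminus\cont(\mathbf{z}_{i+1})$ and $y\in\cont(\mathbf{z}_{i+1})\setminus\cont(\mathbf{z}_i)$ from (CF2c) and derives a contradiction with \eqref{equivalent}, just as you do (you do it for general $i<j$, the paper for adjacent squares, which suffices). Where you genuinely diverge is the first step. The paper proves its claim $(\dag)$ by two successive contradiction arguments: if $\cont(\mathbf{z}_i)$ lies in no single square of $\mathbf{w}_k'$, it extracts a pair with $x\prec_{\mathbf{w}_k'}y$ but $x\not\prec_{\mathbf{w}_k}y$; and if the containment $\cont(\mathbf{z}_i)\subseteq\cont(\mathbf{z}_g')$ were proper, it uses (CF2b) and (CF2c) to force $\cont(\mathbf{z}_i)\subseteq\cont(\mathbf{z}_\ell)$ for some earlier square of $\mathbf{w}_k$, contradicting (CF2c); it then invokes symmetry to get the reverse inclusion of the two collections. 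You instead encode (CF2b) as an interval representation, prove the dictionary (incomparable if and only if co-occurring in a square if and only if overlapping intervals), and characterize the square contents intrinsically as the maximal cliques of the incomparability graph via the Helly property for intervals, with (CF2c) supplying maximality. Since Lemma~\ref{z con} and the hypothesis \eqref{equivalent} say that $\mathbf{w}_k$ and $\mathbf{w}_k'$ have the same incomparability graph, both collections are the maximal cliques of one and the same graph, so the equality of the two collections — including the paper's containment-to-equality upgrade — comes for free and symmetrically; as in the paper, (CF2a) then turns equal contents into equal perfect-square words. What your packaging buys is conceptual clarity (each canonical-form condition is used exactly once, in a visible role) and a uniqueness-of-canonical-form statement that is intrinsic to the order data; what the paper's version buys is elementary self-containedness, avoiding any graph-theoretic apparatus.
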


\begin{proof}
Without loss of generality, we may assume that
\[
\mathbf{w}_k= \mathbf{z}_1 \cdots \mathbf{z}_p \quad \mbox{and} \quad \mathbf{w}_k'= \mathbf{z}_1' \cdots \mathbf{z}_{p'}'.
\]
First we may show that
\begin{enumerate}
\item[(\dag)] if $\mathbf{z}_i$ is a perfect square factor of $\mathbf{w}_k$, then
$\mathbf{z}_i$ also is a perfect square factor of $\mathbf{w}_k'$.
\end{enumerate}

Suppose that $\cont(\mathbf{z}_i)$ is not any subset of $\cont(\mathbf{z}_g')$ for $g=1, \ldots, p'$. Since $\cont(\mathbf{w}_k)= \cont(\mathbf{w}_k')$ by Lemma~\ref{z con}, without loss of generality, we may assume that
$\cont(\mathbf{z}_l') \cup \cont(\mathbf{z}_g') \subseteq \cont(\mathbf{z}_i)$ for some $1 \leq \ell < g \leq p'$. By the condition (CF2c), let $x\in \cont(\mathbf{z}_l') \setminus \cont(\mathbf{z}_{l+1}')$ and $y\in \cont(\mathbf{z}_g') \setminus \cont(\mathbf{z}_l')$. It follows from (CF2b) that $x\not \in \cont(\mathbf{z}_{l+1}' \cdots \mathbf{z}_g'\cdots \mathbf{z}_p')$ and $y \not \in \cont(\mathbf{z}_1'\cdots \mathbf{z}_l')$, that is $x \prec_{\mathbf{w}_k'} y$. But $x \not\prec_{\mathbf{w}_k} y$ since $x,y \in \cont(\mathbf{z}_i)$, which contradicts the assumption. Therefore, we may assume that $\cont(\mathbf{z}_i) \subseteq \cont(\mathbf{z}_g')$ for some $g=1, \ldots, p'$.

Suppose that $\cont(\mathbf{z}_i) \subset \cont(\mathbf{z}_g')$. Without loss of generality, we may assume that $ z \in \cont(\mathbf{z}_g')\setminus \cont(\mathbf{z}_i)$. Then $z\in  \cont(\mathbf{z}_1 \cdots \mathbf{z}_{i-1}\mathbf{z}_{i+1} \cdots \mathbf{z}_p)$ by Lemma~\ref{z con} and $z\not\in  \cont(\mathbf{z}_1 \cdots \mathbf{z}_{i-1})\cap \cont(\mathbf{z}_{i+1} \cdots \mathbf{z}_p)$ by the condition (CF2b). Hence by symmetry, we may assume that $z\in  \cont(\mathbf{z}_1 \cdots \mathbf{z}_{i-1})\setminus \cont(\mathbf{z}_{i+1} \cdots \mathbf{z}_p)$, in particular, say $z\in  \cont(\mathbf{z}_\ell)\setminus \cont(\mathsf{z}_{\ell+1} \cdots \mathbf{z}_{i-1})$ for some $\ell < i$. Since  $z \not\prec_{\mathbf{w}_k'} x$ for each $x \in \cont(\mathbf{z}_i)\subseteq \cont(\mathbf{z}_g')$, it follows from the assumption that $x \in \cont(\mathbf{z}_1 \cdots \mathbf{z}_{\ell})$. Hence $\cont(\mathbf{z}_i) \subseteq \cont(\mathbf{z}_1 \cdots \mathbf{z}_{\ell})$. Now it follows from the condition (CF2b) that $\cont(\mathbf{z}_i) \subseteq \cont(\mathbf{z}_{\ell})$, which contradicts the condition (CF2c). Hence $\cont(\mathbf{z}_i) =\cont(\mathbf{z}_g')$. Now by the definition of perfect square it is easy to see that $\mathbf{z}_i= \mathbf{z}_g'$ and so (\dag) holds.

The converse of (\dag) also holds by symmetry. It then follows that $\mathbf{z}_i$ is a perfect square factor of $\mathbf{w}_k$ if and only if $\mathbf{z}_i$ also is a perfect square factor of $\mathbf{w}_k'$. Hence it follows from the conditions (CF2) that
$p=p'$ and $\{\mathbf{z}_1, \ldots, \mathbf{z}_{p}\} = \{\mathbf{z}_1', \ldots, \mathbf{z}_{p}'\}$.

Suppose that the occurrence of $\mathbf{z}_{i+1}$ precedes the occurrence of $\mathbf{z}_{i}$ in $\mathbf{w}_k'$.  By the condition (CF2c), let $x\in \cont(\mathbf{z}_i) \setminus \cont(\mathbf{z}_{i+1})$ and $y\in \cont(\mathbf{z}_{i+1}) \setminus \cont(\mathbf{z}_{i})$. Then $x \prec_{\mathbf{w}_k} y$, but $x \not\prec_{\mathbf{w}_k'} y$, which contradicts the assumption. Hence the order of occurrence of $\{\mathbf{z}_1, \ldots, \mathbf{z}_{p}\}$ in $\mathbf{w}_k$ is the same as the order of occurrence of $\{\mathbf{z}_1, \ldots, \mathbf{z}_{p}\}$ in $\mathbf{w}_k'$. Therefore $\mathbf{w}_k=\mathbf{w}_k'$.
\end{proof}

\begin{lemma} \label{zi=zi}
For each $x,y\in \cont(\mathbf{w}_k)=\cont(\mathbf{w}_k')$, if
$x \prec_{\mathbf{w}_k} y$, then $x \prec_{\mathbf{w}_k'} y$.
\end{lemma}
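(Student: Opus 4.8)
The plan is to argue by contradiction: suppose $x \prec_{\mathbf{w}_k} y$ but $x \not\prec_{\mathbf{w}_k'} y$, and deduce that $\mathsf{var}\{A^1\times B^1\}$ satisfies one of the identities forbidden by Lemma~\ref{lem: non-id}. First I would record the local structure. Since $x,y$ are letters of perfect squares of $\mathbf{w}_k$ they are non-simple, hence by Lemma~\ref{sim}(1) non-simple in both $\mathbf{w}$ and $\mathbf{w}'$, so each occurs at least twice; and because both halves of a perfect square contribute every letter of its content, $x \prec_{\mathbf{w}_k} y$ forces $x$ and $y$ to occupy disjoint, correctly ordered blocks of squares in $\mathbf{w}_k$ (these blocks are contiguous by (CF2b)). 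Since $\mathbf{w}_k'$ is in canonical form it too is a product of perfect squares by (CF2a), so the failure $x \not\prec_{\mathbf{w}_k'} y$ leaves exactly two local configurations inside $\mathbf{w}_k'$: (i) every occurrence of $y$ precedes every occurrence of $x$ (the $x$- and $y$-blocks are disjoint and reversed), or (ii) some perfect square of $\mathbf{w}_k'$ contains both $x$ and $y$ (the two blocks overlap).

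The engine of the proof is a projection, set up as in the proof of Lemma~\ref{z con}: replace $\mathbf{w}\approx\mathbf{w}'$ by the valid identity $a\mathbf{w}b \approx a\mathbf{w}'b$ with $a,b$ fresh, and fix simple delimiters $s,t$ bracketing the block $\mathbf{w}_k$, taking $s\in\cont(\mathbf{s}_k)$ or $s=a$ when $k=0$, and $t\in\cont(\mathbf{s}_{k+1})$ or $t=b$ when $k=n$; these are common to both sides since $\mathbf{s}_i=\mathbf{s}_i'$. In the principal case, where $x$ and $y$ occur only inside block $k$, projecting onto $\{x,y\}$ and collapsing powers by \eqref{basis S xx} turns $\mathbf{w}\approx\mathbf{w}'$ into $x^2y^2\approx\mathbf{v}$. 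In configuration (i) the word $\mathbf{w}_k'$ projects to $y^{2c}x^{2a}$, which \eqref{basis S xx} reduces to $\mathbf{v}=y^2x^2$; in configuration (ii) it projects to $x^{2a}(xy)^{2b}y^{2c}$ with $b\geq 1$, which \eqref{basis S xy2x} together with Lemma~\ref{lem: perfect square reduce} reduces to $\mathbf{v}=xy^2x$. The first yields $x^2y^2\approx y^2x^2$, which fails in $A^1$ under $x=e,\,y=d$; the second is precisely \eqref{id xxyy}. Either way Lemma~\ref{lem: non-id} is contradicted, so in the principal case $x \prec_{\mathbf{w}_k'} y$.

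The delicate part is the general position of $x,y$, namely when $x$ or $y$ also occurs in blocks other than $k$, since then the bare $\{x,y\}$-projection no longer isolates block $k$. Here I would keep the delimiters in the projection set $\{x,y,s,t\}$, so that occurrences of $x,y$ coming from blocks before $k$ land to the left of $s$ and those from blocks after $k$ land to the right of $t$, while the block-$k$ occurrences sit between $s$ and $t$ in the patterns analysed above. The forbidden identities \eqref{id xyyxt} and \eqref{id txyyx}, which detect exactly the displacement of a non-simple letter past a simple delimiter in the presence of a square, then play the role that \eqref{id xxyy} played in the principal case. Conditions (CF2b) and (CF3) are what make this succeed: (CF3) prohibits precisely the cross-block co-occurrences of a straddling pair $x,y$ that would otherwise allow the outer occurrences to be rearranged into a non-forbidden pattern, so that after using $\circledS$ to absorb the outer squares the projected identity still collapses onto one of the four identities of Lemma~\ref{lem: non-id}.

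I expect this cross-block bookkeeping — matching each admissible placement of $x,y$ relative to the delimiters $s,t$ with the appropriate forbidden identity, while invoking (CF3) to exclude the inadmissible placements — to be the main obstacle, whereas the two local reductions to \eqref{id xxyy} and to $x^2y^2\approx y^2x^2$ are routine applications of \eqref{basis S xx}, \eqref{basis S xy2x} and Lemma~\ref{lem: perfect square reduce}. Once Lemma~\ref{zi=zi} is established, the symmetric statement follows by interchanging $\mathbf{w}$ and $\mathbf{w}'$, giving the equivalence~\eqref{equivalent}, and Lemma~\ref{z equivalent} then yields $\mathbf{w}_k=\mathbf{w}_k'$.
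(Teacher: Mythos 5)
Your skeleton agrees with the paper's: assume $x \prec_{\mathbf{w}_k} y$ but $x \not\prec_{\mathbf{w}_k'} y$, project onto a small letter set containing simple delimiters, and contradict Lemma~\ref{lem: non-id}. Your principal case is essentially the paper's Case~1 and is sound (the paper left-multiplies by $x$, so both of your configurations collapse to the single identity \eqref{id xxyy} via Lemma~\ref{lem: perfect square reduce}; your configuration~(i) instead produces $x^2y^2 \approx y^2x^2$, which you correctly check fails in $A^1$, so that deviation is harmless).

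The genuine gap is in the case you defer as ``cross-block bookkeeping.'' You write that (CF3) ``prohibits precisely the cross-block co-occurrences of a straddling pair $x,y$,'' but (CF3)/(CF2d) is stated only for letters lying in \emph{adjacent} perfect squares: $x\in \cont(\mathbf{z}_{\ell-1})\setminus\cont(\mathbf{z}_\ell)$ and $y\in \cont(\mathbf{z}_\ell)\setminus\cont(\mathbf{z}_{\ell-1})$. If the last square containing $x$ is $\mathbf{z}_\ell$ and the first square containing $y$ is $\mathbf{z}_g$ with $g>\ell+1$, (CF3) says nothing about the pair $(x,y)$; in particular nothing prevents $y$ from also occurring in the later block $\mathbf{w}_h$ in which $x$ recurs, and then your projection onto $\{x,y,s,t\}$ yields a pattern with occurrences of $y$ to the right of the delimiter $t$, which need not be (and in general is not) one of the four identities of Lemma~\ref{lem: non-id} --- compare \eqref{basis S txsy}, which shows such mixed patterns can be genuinely satisfied by $\mathsf{var}\{A^1\times B^1\}$. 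The paper closes this hole with two devices absent from your proposal: first, it takes a \emph{minimal} violating pair, adding hypotheses (its conditions (b) and (c)) that the $\prec$-relations between $x,y$ and every intermediate letter $z\in\cont(\mathbf{z}_{\ell+1}\cdots\mathbf{z}_{g-1})$ are preserved in $\mathbf{w}'$; second, in the non-adjacent subcase it introduces auxiliary letters $z\in\cont(\mathbf{z}_{\ell+1})\setminus\cont(\mathbf{z}_\ell)$ and $s\in\cont(\mathbf{z}_{g-1})\setminus\cont(\mathbf{z}_g)$ (supplied by (CF2c)), uses (b), (c), (d) to transfer the inversion to these letters (deriving $s\prec_{\mathbf{w}_k'}z$), and only then applies (CF3) to adjacent pairs and projects --- in one subcase onto $\{z,s,t\}$, a set containing neither $x$ nor $y$. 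Without the minimality hypotheses and this transfer to adjacent pairs, the reduction you sketch cannot be completed, so the proof is incomplete at exactly the step you flagged as the main obstacle.
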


\begin{proof} Let
\[
\mathbf{w}_k= \mathbf{z}_1 \cdots \mathbf{z}_p.
\]
Seeking a contradiction, we may assume that
\begin{enumerate}
\item[(a)] $x\in \cont(\mathbf{z}_{\ell})\setminus \cont(\mathbf{z}_{\ell+1} \cdots \mathbf{z}_{p})$ and $y \in \cont(\mathbf{z}_{g})\setminus \cont(\mathbf{z}_{1} \cdots \mathbf{z}_{g-1})$ for some $1\leq \ell<g \leq p$;%
\item[(b)] for each $z\in \cont(\mathbf{z}_{\ell+1}\cdots \mathbf{z}_{g-1})$, if $x \prec_{\mathbf{w}_k} z$, then $x \prec_{\mathbf{w}_k'} z$;%
\item[(c)] for each $z\in \cont(\mathbf{z}_{\ell+1}\cdots \mathbf{z}_{g-1})$, if $z \prec_{\mathbf{w}_k} y$, then $z \prec_{\mathbf{w}_k'} y$;%
\item[(d)] $x \not\prec_{\mathbf{w}_k'} y$, that is, there exist some $x$ occur after some $y$ in $\mathbf{w}_k'$.
\end{enumerate}
There are three cases to consider.

\noindent{\bf Case~1.} $y\not\in \cont(\mathbf{w}_0 \cdots \mathbf{w}_{k-1})$ and $x \not\in \cont(\mathbf{w}_{k+1} \cdots \mathbf{w}_{n})$.
Then
\[
x^2y^2\stackrel{\eqref{basis S xx}} \approx x\mathbf{w}_{\{x, y\}} \approx x\mathbf{w}'_{\{x, y\}} \stackrel{\eqref{basis S xy2x}} \approx xy^2x,
\]
but this implies that the variety $\mathsf{var} \{A^1 \times B^1\}$ satisfies the identity \eqref{id xxyy}, contradicting Lemma~\ref{lem: non-id}.

\noindent{\bf Case~2.} $x \in \cont(\mathbf{w}_{k+1} \cdots \mathbf{w}_{n})$, say $x \in \cont(\mathbf{w}_{h}) \setminus \cont(\mathbf{w}_{k+1}\cdots \mathbf{w}_{h-1})$ for some $h>k$ and let $t$ be a simple letter in $\mathbf{s}_{h}$.

\noindent{\bf 2.1.} $g=l+1$. Then $y \not\in \cont (\mathbf{z}_{1}\cdots \mathbf{z}_{\ell})$ by (CF2b) and (a), and so by (CF3), $y \not\in \cont (\mathbf{w}_{0}\cdots \mathbf{w}_{k-1}\mathbf{w}_{h}\cdots \mathbf{w}_{n})$.  Hence it follows from Lemma~\ref{z con} that
\[
xy^2tx\stackrel{\eqref{basis S xx}} \approx x\mathbf{w}_{\{x, y, t\}} \approx x\mathbf{w}'_{\{x, y, t\}} \stackrel{\eqref{basis S xx}, \eqref{basis S xy2x}} \approx xy^2xtx,
\]
but this implies that the variety $\mathsf{var} \{A^1 \times B^1\}$ satisfies the identity \eqref{id xyyxt}, contradicting Lemma~\ref{lem: non-id}.

\noindent{\bf 2.2.} $g>l+1$. Then by (CF2c), there exist a letter $z$ such that $z \in \cont(\mathbf{z}_{\ell+1}) \setminus \cont(\mathbf{z}_{\ell})$ , and so $z \not\in \cont(\mathbf{z}_{1} \cdots \mathbf{z}_{\ell})$ by (CF2b). Suppose that $z\not\in \cont(\mathbf{z}_g)$. Then $z \not\in \cont(\mathbf{z}_{g} \cdots \mathbf{z}_{p})$ by (CF2b). Hence it is easy to see that $x \prec_{\mathbf{w}_{k}} z \prec_{\mathbf{w}_{k}} y$. Since $x \prec_{\mathbf{w}_{k}} z$, it follows from the assumption (b) that $x \prec_{\mathbf{w}_{k}'} z$. Since some $x$ occur after some $y$ in $\mathbf{w}_k'$ by (d),
\[
\mathbf{w}_k'= \cdots y\cdots x \cdots z\cdots,
\]
that is some $y$ occur before some $z$ in $\mathbf{w}_k'$. Hence $z \not\prec_{\mathbf{w}_{k}'} y$, which contradicts the assumption (c). Therefore $z\in \cont(\mathbf{z}_g)$. It follows from the condition (CF2c) that there exists a letter $s \ne z$ such that $s \in \cont(\mathbf{z}_{g-1}) \setminus \cont(\mathbf{z}_{g})$, and so $s\not\in \cont(\mathbf{z}_{g} \cdots \mathbf{z}_{p})$ by (CF2b). Hence it is easy to show that $x \prec_{\mathbf{w}_{k}} z$ and $s \prec_{\mathbf{w}_{k}} y$. It follows from the assumptions (b) and (c) that $x \prec_{\mathbf{w}_{k}'} z$ and $s \prec_{\mathbf{w}_{k}'} y$. Since some $x$ occur after some $y$ in $\mathbf{w}_k'$ by (d), it follows that $s \prec_{\mathbf{w}_{k}'} z$.

If $s\in \cont(\mathbf{w}_{k+1} \cdots \mathbf{w}_{h-1})$, say $s \in \cont(\mathbf{w}_{q}) \setminus \cont(\mathbf{w}_{k+1}\cdots \mathbf{w}_{q-1})$ for some $k<q<h$, then since
$s \in \cont(\mathbf{z}_{g-1}) \cap \cont(\mathbf{w}_{q})$ and $y\in \cont(\mathbf{z}_{g})\setminus \cont(\mathbf{z}_{1} \cdots \mathbf{z}_{g-1})$, it follows from (CF3) that $y \not\in \cont (\mathbf{w}_{0}\cdots \mathbf{w}_{k-1}\mathbf{w}_{q}\cdots\mathbf{w}_{h}\cdots \mathbf{w}_{n})$. Hence it follows from Lemma~\ref{z con} that
\[
xy^2tx\stackrel{\eqref{basis S xx}} \approx x\mathbf{w}_{\{x, y, t\}} \approx x\mathbf{w}'_{\{x, y, t\}} \stackrel{\eqref{basis S xx}, \eqref{basis S xy2x}} \approx xy^2xtx
\]
but this implies that the variety $\mathsf{var} \{A^1 \times B^1\}$ satisfies the identity \eqref{id xyyxt}, contradicting Lemma~\ref{lem: non-id}.

If $s\not\in \cont(\mathbf{w}_{k+1} \cdots \mathbf{w}_{h-1})$, then $s\not\in \cont(\mathbf{w}_{k+1}' \cdots \mathbf{w}_{h-1}')$ by Lemma~\ref{z con}.  Since $x\in \cont(\mathbf{z}_\ell) \cap \cont(\mathbf{w}_{h})$ and $z\in \cont(\mathbf{z}_{\ell+1}) \setminus \cont(\mathbf{z}_{1}\cdots \mathbf{z}_{\ell})$, it follows from (CF3) that $z \not\in \cont (\mathbf{w}_{0}\cdots \mathbf{w}_{k-1}\mathbf{w}_{h}\cdots \mathbf{w}_{n})$, and so $z \not\in \cont (\mathbf{w}_{0}'\cdots \mathbf{w}_{k-1}'\mathbf{w}_{h}'\cdots \mathbf{w}_{n}')$ by Lemma~\ref{z con}.
It follows that
\[
sz^2sts\stackrel{\eqref{basis S xx}} \approx \mathbf{w}_{\{z, t, s\}}s \approx\mathbf{w}'_{\{z, t, s\}}s \stackrel{\eqref{basis S xx}, \eqref{basis S xy2x}} \approx s^2z^2ts,
\]
but this implies that the variety $\mathsf{var} \{A^1 \times B^1\}$ satisfies the identity \eqref{id xyyxt}, contradicting Lemma~\ref{lem: non-id}.

\noindent{\bf Case~3.} $y\in \cont(\mathbf{w}_0 \cdots \mathbf{w}_{k-1})$. By arguments that are dual to Case 2 we may show that the variety $\mathsf{var} \{A^1 \times B^1\}$ satisfies either the identity \eqref{id xxyy} or the identity \eqref{id txyyx}, contradicting Lemma~\ref{lem: non-id}.
\end{proof}

\begin{proof}[\bf Proof of Theorem~\ref{S basis theorem}]
It is routine to verify that the identities $\circledS$ hold in semigroups $A^{1}$ and $A^{1}$
so that the variety $\mathsf{var} \{A^1 \times B^1\}$
satisfies the identities $\circledS$. It remains to show that any
identity $\mathbf{w} \approx \mathbf{w}'$ of the
variety $\mathsf{var} \{A^1 \times B^1\}$ is a consequence of the identities $\circledS$.
In the presence of Lemma~\ref{canonical word}, it suffices to
assume that the identity $\mathbf{w} \approx \mathbf{w}'$ is
canonical. Without loss of generality, we may assume that $\mathbf{w}$ and $\mathbf{w}'$ are in the form of \eqref{w=w'},
and $n=n'$ and $\mathbf{s}_k =\mathbf{s}_k'$ for each $k=0, \ldots, n$.
By Lemma~\ref{zi=zi} and its dual, it is easy to see that the condition \eqref{equivalent} is satisfied by the identity $\mathbf{w} \approx \mathbf{w}'$. Hence it follows from Lemma~\ref{z equivalent} that $\mathbf{w}_k = \mathbf{w}_k'$ for each $k=0, \ldots, n$.
Thus the identity $\mathbf{w} \approx \mathbf{w}'$ is
trivial and so is vacuously a consequence of the identities $\circledS$.
\end{proof}

\section{Monoid subvarieties of $\mathsf{var} \{A^1 \times B^1\}$}\label{sec lattice}

In this section, all monoid subvarieties of $\mathsf{var} \{A^1 \times B^1\}$ will be characterized and the monoid subvariety lattice of $\mathsf{var} \{A^1 \times B^1\}$ will be completely described. For convenience,
the monoid subvariety of $\mathsf{var} \{A^1 \times B^1\}$
defined by $\Pi$ is denoted by $\mathsf{var}_{\mathbb{M}} \{ \Pi \}$.

\begin{lemma}
\label{lem zi=zi}
Let $\mathbf{w} \approx \mathbf{w}'$ be any identity in canonical form where
\[
\mathbf{w} = \mathbf{w}_0 \prod_{i=1}^{k-1}(\mathbf{s}_i \mathbf{w}_i)\ \ \mathbf{s}_k  \underbrace{\mathbf{z}_{1} \cdots \mathbf{z}_{p}}_{\mathbf{w}_k}\prod_{i=k+1}^{n} (\mathbf{s}_i \mathbf{w}_i)  \quad \text{and} \quad \mathbf{w}' = \mathbf{w}_0' \prod_{i=1}^{n} (\mathbf{s}_i \mathbf{w}_i') %
\]
and $\cont(\mathbf{w}_i)= \cont(\mathbf{w}_i')$ for each $i=0, 1, \ldots, n$.
Suppose that $x$
and $y$ are non-simple letters of $\cont(\mathbf{w}_k) = \cont(\mathbf{w}_k')$ such that
\begin{enumerate}
\item[(a)] $x \prec_{\mathbf{w}_k} y$, say $x\in \cont(\mathbf{z}_{\ell})\setminus \cont(\mathbf{z}_{\ell+1} \cdots \mathbf{z}_{p})$ and $y \in \cont(\mathbf{z}_{g})\setminus \cont(\mathbf{z}_{1} \cdots \mathbf{z}_{g-1})$ for some $1\leq \ell<g \leq p$;%
\item[(b)] for each $z\in \cont(\mathbf{z}_{\ell+1}\cdots \mathbf{z}_{g-1})$, if $x \prec_{\mathbf{w}_k} z$, then $x \prec_{\mathbf{w}_k'} z$;%
\item[(c)] for each $z\in \cont(\mathbf{z}_{\ell+1}\cdots \mathbf{z}_{g-1})$, if $z \prec_{\mathbf{w}_k} y$, then $z \prec_{\mathbf{w}_k'} y$;%
\item[(d)] $x \not\prec_{\mathbf{w}_k'} y$.
\end{enumerate}
Then
\[
\mathsf{var}_{\mathbb{M}} \{ \mathbf{w} \approx \mathbf{w}' \} =
\mathsf{var}_{\mathbb{M}} \{\mathbf{w}^{*} \approx \mathbf{w}', \Lambda \}
\]
where $\mathbf{w}^{*}$ equal either
\[
\mathbf{w}_0 \prod_{i=1}^{k-1}(\mathbf{s}_i \mathbf{w}_i)\ \ \mathbf{s}_k  \underbrace{\mathbf{z}_{1} \cdots \mathbf{z}_{\ell}\cdots \mathbf{z}_{g-1}(xy)^2\mathbf{z}_{g}\cdots \mathbf{z}_{p}}_{\mathbf{w}_k}\prod_{i=k+1}^{n} (\mathbf{s}_i \mathbf{w}_i)
 \]
or
\[
\mathbf{w}_0 \prod_{i=1}^{k-1}(\mathbf{s}_i \mathbf{w}_i)\ \ \mathbf{s}_k  \underbrace{\mathbf{z}_{1} \cdots \mathbf{z}_{\ell}(xy)^2 \mathbf{z}_{\ell+1}\cdots \mathbf{z}_{g}\cdots \mathbf{z}_{p}}_{\mathbf{w}_k}\prod_{i=k+1}^{n} (\mathbf{s}_i \mathbf{w}_i)
 \]
and $\Lambda$ is some subset of $\{ (\ref{id xyyxt}), (\ref{id txyyx}), (\ref{id xtxsx})\}$.
\end{lemma}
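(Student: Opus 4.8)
The plan is to prove the two claimed inclusions of varieties by reducing them to two deducibility statements modulo $\circledS$. Writing $\Lambda$ for the subset that will be determined during the proof, I would establish
\[
\text{(A)}\quad \{\mathbf{w}\approx\mathbf{w}'\}\Vdash\Lambda, \qquad\text{and}\qquad \text{(B)}\quad \Lambda\Vdash\mathbf{w}\approx\mathbf{w}^{*}.
\]
Granting (A) and (B), the equality $\mathsf{var}_{\mathbb{M}}\{\mathbf{w}\approx\mathbf{w}'\}=\mathsf{var}_{\mathbb{M}}\{\mathbf{w}^{*}\approx\mathbf{w}',\Lambda\}$ follows purely by transitivity of $\approx$. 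Indeed, any monoid in the left-hand variety satisfies $\mathbf{w}\approx\mathbf{w}'$, hence satisfies $\Lambda$ by (A) and then $\mathbf{w}\approx\mathbf{w}^{*}$ by (B), so it satisfies $\mathbf{w}^{*}\approx\mathbf{w}\approx\mathbf{w}'$; conversely any monoid in the right-hand variety satisfies $\Lambda$, hence $\mathbf{w}\approx\mathbf{w}^{*}$ by (B), so it satisfies $\mathbf{w}\approx\mathbf{w}^{*}\approx\mathbf{w}'$. Thus the whole statement reduces to (A) and (B), and the role of $\mathbf{w}^{*}$ (with the square $(xy)^2$ inserted either just before $\mathbf{z}_g$ or just after $\mathbf{z}_\ell$) and of the choice of $\Lambda$ is precisely to make both hold at once.

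For (A) I would reuse the projection technique from the proof of Lemma~\ref{zi=zi}. Since we work inside a monoid variety, the identity $a\mathbf{w}b\approx a\mathbf{w}'b$ is available for fresh letters $a,b$, and for any letter set $\mathcal{B}$ the projected identity $\mathbf{w}_{\mathcal{B}}\approx\mathbf{w}'_{\mathcal{B}}$ holds. Retaining $\mathcal{B}=\{x,y\}$ together with one suitable simple letter $t$ and normalising both sides by $\circledS$ (via~\eqref{basis S xx} and~\eqref{basis S xy2x}) lands exactly on one of the forbidden identities of Lemma~\ref{lem: non-id}. The configuration (a)--(d) splits into the same subcases treated in Lemma~\ref{zi=zi}: when $x$ recurs in a later block $\mathbf{w}_h$, so that a simple letter $t\in\mathbf{s}_h$ separates the $y$-region from the recurring $x$, the projection yields~\eqref{id xyyxt}; the dual situation, when $y$ already occurs in an earlier block, yields~\eqref{id txyyx}; and when a single simple letter is forced between two occurrences of $x$ one obtains~\eqref{id xtxsx}. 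Which of the two displayed forms of $\mathbf{w}^{*}$ is taken is dictated by which side—the $x$-side or the $y$-side—carries the recurrence, so that the inserted square $(xy)^2$ sits adjacent to the block responsible for the forbidden identity.

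For (B) I would give a direct equational deduction of $\mathbf{w}\approx\mathbf{w}^{*}$ from $\circledS\cup\Lambda$, in the spirit of the normalisations in Lemmas~\ref{lem: perfect square}--\ref{canonical word}. The guiding observation is that $\circledS$ can freely duplicate, relocate and absorb squares (Lemmas~\ref{lem: perfect square} and~\ref{lem: perfect square reduce}) but can never convert a word in which $x\prec y$ into one in which $x\not\prec y$; inserting $(xy)^2$ creates exactly such a local $yx$-inversion, and it is this single crossing that the forbidden identity in $\Lambda$ licenses. Thus one uses~\eqref{basis S xx} and~\eqref{basis S xy2x} to manufacture the relevant square next to $\mathbf{z}_\ell$ (respectively $\mathbf{z}_g$), applies the identity of $\Lambda$ once to introduce the inversion, and then tidies up by $\circledS$; since identities are symmetric, the same deduction read backwards removes the square, so (B) holds in both directions.

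The hard part will be the bookkeeping of the case analysis together with the verification that only the three permitted identities occur. Two points demand the most care. First, one must check in each subcase that the simple letter produced for the projection in (A) is the same one whose presence makes the local insertion in (B) feasible; hypotheses (b) and (c) are exactly what guarantee that no intermediate letter $z\in\cont(\mathbf{z}_{\ell+1}\cdots\mathbf{z}_{g-1})$ obstructs this matching, mirroring Case~2.2 of Lemma~\ref{zi=zi}. Second, and most delicate, is the degenerate configuration in which $x$ has no occurrence in any later block and $y$ none in any earlier block (Case~1 of Lemma~\ref{zi=zi}): there no separating simple letter exists, the naive projection produces the pattern $x^2y^2\approx xy^2x$ of~\eqref{id xxyy}, which is not among the three permitted identities, and so this configuration must be controlled—either shown not to arise from the provenance of the pair $(x,y)$ to which the lemma is applied, or reduced to an application in which a bounding simple letter is present so that~\eqref{id xyyxt} or~\eqref{id txyyx} governs the crossing instead. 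Handling this boundary case correctly is where I expect the proof to require the most attention.
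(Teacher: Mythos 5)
Your two-inclusion scheme (A)/(B) and its mechanics coincide with the paper's own proof of this lemma: the paper runs exactly the three cases of Lemma~\ref{zi=zi}, in each case first deducing the relevant forbidden identity from $\mathbf{w}\approx\mathbf{w}'$ (your step (A), via the projections onto $\{x,y,t\}$ and Lemma~\ref{lem: non-id}), and then giving an explicit $\circledS$-chain that manufactures squares with \eqref{basis S xx} and \eqref{basis S xy2x}, applies the forbidden identity once to create the $(xy)^2$-crossing, and tidies up (your step (B)), concluding $\mathsf{var}_{\mathbb{M}}\{\mathbf{w}\approx\mathbf{w}'\}=\mathsf{var}_{\mathbb{M}}\{\Lambda,\mathbf{w}^{*}\approx\mathbf{w}'\}$ in each case. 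Your Cases with a recurrence of $x$ in a later block and of $y$ in an earlier block are precisely the paper's Cases~2 and~3, producing \eqref{id xyyxt} and \eqref{id txyyx} respectively, with the choice between the two displayed forms of $\mathbf{w}^{*}$ governed, as you say, by which side carries the recurrence.

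The gap is the boundary case you flagged, and neither of your two proposed escapes can close it. The configuration of Case~1 ($y$ absent from all earlier blocks and $x$ from all later ones) genuinely arises: already for the canonical identity $x^2y^2\approx(xy)^2$, with $n=0$, there is no simple letter anywhere in either word, so no bounding simple letter can be procured, and nothing about the provenance of the pair $(x,y)$ excludes the configuration. The paper does not avoid it either: its Case~1 deduces \eqref{id xxyy} from $\mathbf{w}\approx\mathbf{w}'$ and concludes $\mathsf{var}_{\mathbb{M}}\{\mathbf{w}\approx\mathbf{w}'\}=\mathsf{var}_{\mathbb{M}}\{\eqref{id xxyy},\mathbf{w}^{*}\approx\mathbf{w}'\}$, i.e.\ it admits \eqref{id xxyy} into $\Lambda$ even though the statement restricts $\Lambda$ to $\{\eqref{id xyyxt},\eqref{id txyyx},\eqref{id xtxsx}\}$ --- a slip in the statement as printed, harmless downstream because $\eqref{id xxyy}\Vdash\eqref{id xyyxt}$ and $\eqref{id xxyy}\Vdash\eqref{id txyyx}$, and $\circledS\cup\{\eqref{id xxyy}\}$ defines $\mathsf{var}_{\mathbb{M}}\{Q^1\}$, which duly appears as a node of the final subvariety lattice. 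So the correct completion of your plan is not to argue the degenerate configuration away but simply to enlarge the permitted set: in Case~1 take $\Lambda=\{\eqref{id xxyy}\}$ and run your (A)/(B) reduction verbatim, the projection giving $x^2y^2\approx xy^2x$ and the reinsertion chain using \eqref{id xxyy} once to place $(xy)^2$ before $\mathbf{z}_g$.
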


\begin{proof}
There are three cases to consider.

\noindent{\bf Case~1.} $y\not\in \cont(\mathbf{w}_0 \cdots \mathbf{w}_{k-1})$ and $x \not\in \cont(\mathbf{w}_{k+1} \cdots \mathbf{w}_{n})$. Then by Case 1 of Lemma~\ref{zi=zi} that
\[
\mathbf{w} \approx \mathbf{w}' \Vdash \eqref{id xxyy}.
\]
Therefore
\begin{equation}
\label{z lemma display1}
\mathsf{var}_{\mathbb{M}} \{ \mathbf{w} \approx \mathbf{w}' \} = \mathsf{var}_{\mathbb{M}} \{ \eqref{id xxyy}, \mathbf{w} \approx \mathbf{w}' \}.%
\end{equation}
Now the deduction $\eqref{id xxyy} \Vdash \mathbf{w} \approx
\mathbf{w}^{*}$ holds since
\[
\begin{array}[c]{rcl}
\mathbf{w} \!\!\!\! & = & \!\!\!\! \mathbf{w}_0 \prod_{i=1}^{k-1} (\mathbf{s}_i \mathbf{w}_i)\ \ \mathbf{s}_k \mathbf{z}_{1} \cdots \mathbf{z}_{\ell} \mathbf{z}_{\ell+1} \cdots \mathbf{z}_{g-1}\mathbf{z}_{g} \cdots \mathbf{z}_{p} \prod_{i=k+1}^{n} \ (\mathbf{s}_i \mathbf{z}_i) \\[0.04in] %
& \stackrel{\eqref{basis S xy2x}}{\approx} & \!\!\!\! \mathbf{w}_0 \prod_{i=1}^{k-1} (\mathbf{s}_i \mathbf{w}_i)\ \ \mathbf{s}_k \mathbf{z}_{1} \cdots (\mathbf{z}_{\ell}x^2) \mathbf{z}_{\ell+1} \cdots \mathbf{z}_{g-1}(y^2\mathbf{z}_{g}) \cdots \mathbf{z}_{p} \prod_{i=k+1}^{n} \ (\mathbf{s}_i \mathbf{z}_i) \\[0.04in] %
& \stackrel{\eqref{id xxyy}}{\approx} & \!\!\!\! \mathbf{w}_0 \prod_{i=1}^{k-1} (\mathbf{s}_i \mathbf{w}_i)\ \ \mathbf{s}_k \mathbf{z}_{1} \cdots \mathbf{z}_{\ell}x \mathbf{z}_{\ell+1}x \cdots x \mathbf{z}_{g-1} x y^2 x\mathbf{z}_{g}  \cdots \mathbf{z}_{p} \prod_{i=k+1}^{n} \ (\mathbf{s}_i \mathbf{z}_i) \\[0.04in] %
& \stackrel{\eqref{basis S xy2x}}{\approx} & \!\!\!\! \mathbf{w}_0 \prod_{i=1}^{k-1} (\mathbf{s}_i \mathbf{w}_i)\ \ \mathbf{s}_k \mathbf{z}_{1} \cdots \mathbf{z}_{\ell}x \mathbf{z}_{\ell+1}x  \cdots x \mathbf{z}_{g-1} (xy)^2\mathbf{z}_{g} \cdots \mathbf{z}_{p} \prod_{i=k+1}^{n} \ (\mathbf{s}_i \mathbf{z}_i) \\[0.04in] %
& \stackrel{\eqref{id xxyy}}{\approx} & \!\!\!\! \mathbf{w}_0 \prod_{i=1}^{k-1} (\mathbf{s}_i \mathbf{w}_i)\ \ \mathbf{s}_k \mathbf{z}_{1} \cdots \mathbf{z}_{\ell}x^2 \mathbf{z}_{\ell+1} \cdots \mathbf{z}_{g-1} (xy)^2\mathbf{z}_{g} \cdots \mathbf{z}_{p} \prod_{i=k+1}^{n} \ (\mathbf{s}_i \mathbf{z}_i) \\[0.04in] %
& \stackrel{\eqref{basis S xy2x}}{\approx} & \!\!\!\! \mathbf{w}_0 \prod_{i=1}^{k-1} (\mathbf{s}_i \mathbf{w}_i)\ \ \mathbf{s}_k \mathbf{z}_{1} \cdots \mathbf{z}_{\ell} \mathbf{z}_{\ell+1} \cdots \mathbf{z}_{g-1} (xy)^2\mathbf{z}_{g} \cdots \mathbf{z}_{p} \prod_{i=k+1}^{n} \ (\mathbf{s}_i \mathbf{z}_i) \\[0.04in] %
& = & \!\!\!\! \mathbf{w}^{*}. %
\end{array}
\]
It follows from $(\ref{z lemma display1})$ that
$\mathsf{var}_{\mathbb{M}} \{ \mathbf{w} \approx \mathbf{w}' \} = \mathsf{var}_{\mathbb{M}} \{ (\ref{id xxyy}), \mathbf{w}^{*} \approx
\mathbf{w}'\}$.

\noindent{\bf Case~2.} $x \in \cont(\mathbf{w}_{k+1} \cdots \mathbf{w}_{n})$.
Then by Case 2 of Lemma~\ref{zi=zi} that
\[
\mathbf{w} \approx \mathbf{w}' \Vdash \eqref{id xyyxt}.
\]
Therefore
\begin{equation}
\label{z lemma display2}
\mathsf{var}_{\mathbb{M}} \{ \mathbf{w} \approx \mathbf{w}' \} = \mathsf{var}_{\mathbb{M}} \{ \eqref{id xyyxt}, \mathbf{w} \approx \mathbf{w}' \}.%
\end{equation}
Now the deduction $\eqref{id xyyxt} \Vdash \mathbf{w} \approx
\mathbf{w}^{*}$ holds since
\[
\begin{array}[c]{rcl}
\mathbf{w} \!\!\!\! & = & \!\!\!\! \mathbf{w}_0 \prod_{i=1}^{k-1} (\mathbf{s}_i \mathbf{w}_i)\ \ \mathbf{s}_k \mathbf{z}_{1} \cdots \mathbf{z}_{\ell} \mathbf{z}_{\ell+1} \cdots \mathbf{z}_{g-1}\mathbf{z}_{g} \cdots \mathbf{z}_{p} \prod_{i=k+1}^{n} \ (\mathbf{s}_i \mathbf{z}_i) \\[0.04in] %
& \stackrel{\eqref{basis S xy2x}}{\approx} & \!\!\!\! \mathbf{w}_0 \prod_{i=1}^{k-1} (\mathbf{s}_i \mathbf{w}_i)\ \ \mathbf{s}_k \mathbf{z}_{1} \cdots (\mathbf{z}_{\ell}x^2) \mathbf{z}_{\ell+1} \cdots \mathbf{z}_{g-1}(y^2\mathbf{z}_{g}) \cdots \mathbf{z}_{p} \prod_{i=k+1}^{n} \ (\mathbf{s}_i \mathbf{z}_i) \\[0.04in] %
& \stackrel{\eqref{id xxyy}}{\approx} & \!\!\!\! \mathbf{w}_0 \prod_{i=1}^{k-1} (\mathbf{s}_i \mathbf{w}_i)\ \ \mathbf{s}_k \mathbf{z}_{1} \cdots \mathbf{z}_{\ell}x \mathbf{z}_{\ell+1}x \cdots x \mathbf{z}_{g-1} x y^2 x\mathbf{z}_{g}  \cdots \mathbf{z}_{p} \prod_{i=k+1}^{n} \ (\mathbf{s}_i \mathbf{z}_i) \\[0.04in] %
& \stackrel{\eqref{basis S xy2x}}{\approx} & \!\!\!\! \mathbf{w}_0 \prod_{i=1}^{k-1} (\mathbf{s}_i \mathbf{w}_i)\ \ \mathbf{s}_k \mathbf{z}_{1} \cdots \mathbf{z}_{\ell}x \mathbf{z}_{\ell+1}x  \cdots x \mathbf{z}_{g-1} (xy)^2\mathbf{z}_{g} \cdots \mathbf{z}_{p} \prod_{i=k+1}^{n} \ (\mathbf{s}_i \mathbf{z}_i) \\[0.04in] %
& \stackrel{\eqref{id xxyy}}{\approx} & \!\!\!\! \mathbf{w}_0 \prod_{i=1}^{k-1} (\mathbf{s}_i \mathbf{w}_i)\ \ \mathbf{s}_k \mathbf{z}_{1} \cdots \mathbf{z}_{\ell}x^2 \mathbf{z}_{\ell+1} \cdots \mathbf{z}_{g-1} (xy)^2\mathbf{z}_{g} \cdots \mathbf{z}_{p} \prod_{i=k+1}^{n} \ (\mathbf{s}_i \mathbf{z}_i) \\[0.04in] %
& \stackrel{\eqref{basis S xy2x}}{\approx} & \!\!\!\! \mathbf{w}_0 \prod_{i=1}^{k-1} (\mathbf{s}_i \mathbf{w}_i)\ \ \mathbf{s}_k \mathbf{z}_{1} \cdots \mathbf{z}_{\ell} \mathbf{z}_{\ell+1} \cdots (xy)^2\mathbf{z}_{g} \cdots \mathbf{z}_{p} \prod_{i=k+1}^{n} \ (\mathbf{s}_i \mathbf{z}_i) \\[0.04in] %
& = & \!\!\!\! \mathbf{w}^{*}. %
\end{array}
\]
It follows from $(\ref{z lemma display2})$ that
$\mathsf{var}_{\mathbb{M}} \{ \mathbf{w} \approx \mathbf{w}' \} =
\mathsf{var}_{\mathbb{M}} \{(\ref{id xyyxt}), \mathbf{w}^{*} \approx
\mathbf{w}'\}$.

\noindent{\bf Case~3.} $y\in \cont(\mathbf{w}_0 \cdots \mathbf{w}_{k-1})$. Then by Case 3 of Lemma~\ref{zi=zi} that
\[
\mathbf{w} \approx \mathbf{w}' \Vdash \eqref{id txyyx}.
\]
Therefore
\begin{equation}
\label{z lemma display3}
\mathsf{var}_{\mathbb{M}} \{ \mathbf{w} \approx \mathbf{w}' \} = \mathsf{var}_{\mathbb{M}} \{ \eqref{id txyyx}, \mathbf{w} \approx \mathbf{w}' \}.%
\end{equation}
Now the deduction $\eqref{id txyyx} \Vdash \mathbf{w} \approx
\mathbf{w}^{*}$ holds since
\[
\begin{array}[c]{rcl}
\mathbf{w} \!\!\!\! & = & \!\!\!\! \mathbf{w}_0 \prod_{i=1}^{k-1} (\mathbf{s}_i \mathbf{w}_i)\ \ \mathbf{s}_k \mathbf{z}_{1} \cdots \mathbf{z}_{\ell} \mathbf{z}_{\ell+1} \cdots \mathbf{z}_{g-1}\mathbf{z}_{g} \cdots \mathbf{z}_{p} \prod_{i=k+1}^{n} \ (\mathbf{s}_i \mathbf{z}_i) \\[0.04in] %
& \stackrel{\eqref{basis S xy2x}}{\approx} & \!\!\!\! \mathbf{w}_0 \prod_{i=1}^{k-1} (\mathbf{s}_i \mathbf{w}_i)\ \ \mathbf{s}_k \mathbf{z}_{1} \cdots (\mathbf{z}_{\ell}x^2) \mathbf{z}_{\ell+1} \cdots \mathbf{z}_{g-1}(y^2\mathbf{z}_{g}) \cdots \mathbf{z}_{p} \prod_{i=k+1}^{n} \ (\mathbf{s}_i \mathbf{z}_i) \\[0.04in] %
& \stackrel{\eqref{id xxyy}}{\approx} & \!\!\!\! \mathbf{w}_0 \prod_{i=1}^{k-1} (\mathbf{s}_i \mathbf{w}_i)\ \ \mathbf{s}_k \mathbf{z}_{1} \cdots \mathbf{z}_{\ell} yx^2 y\mathbf{z}_{\ell+1}y \cdots y \mathbf{z}_{g-1} y \mathbf{z}_{g}  \cdots \mathbf{z}_{p} \prod_{i=k+1}^{n} \ (\mathbf{s}_i \mathbf{z}_i) \\[0.04in] %
& \stackrel{\eqref{basis S xy2x}}{\approx} & \!\!\!\! \mathbf{w}_0 \prod_{i=1}^{k-1} (\mathbf{s}_i \mathbf{w}_i)\ \ \mathbf{s}_k \mathbf{z}_{1} \cdots \mathbf{z}_{\ell} (xy)^2 \mathbf{z}_{\ell+1}y  \cdots y \mathbf{z}_{g-1} y \mathbf{z}_{g} \cdots \mathbf{z}_{p} \prod_{i=k+1}^{n} \ (\mathbf{s}_i \mathbf{z}_i) \\[0.04in] %
& \stackrel{\eqref{id xxyy}}{\approx} & \!\!\!\! \mathbf{w}_0 \prod_{i=1}^{k-1} (\mathbf{s}_i \mathbf{w}_i)\ \ \mathbf{s}_k \mathbf{z}_{1} \cdots \mathbf{z}_{\ell}(xy)^2 \mathbf{z}_{\ell+1} \cdots \mathbf{z}_{g-1} y^2\mathbf{z}_{g} \cdots \mathbf{z}_{p} \prod_{i=k+1}^{n} \ (\mathbf{s}_i \mathbf{z}_i) \\[0.04in] %
& \stackrel{\eqref{basis S xy2x}}{\approx} & \!\!\!\! \mathbf{w}_0 \prod_{i=1}^{k-1} (\mathbf{s}_i \mathbf{w}_i)\ \ \mathbf{s}_k \mathbf{z}_{1} \cdots \mathbf{z}_{\ell} (xy)^2 \mathbf{z}_{\ell+1} \cdots \mathbf{z}_{g-1}\mathbf{z}_{g} \cdots \mathbf{z}_{p} \prod_{i=k+1}^{n} \ (\mathbf{s}_i \mathbf{z}_i) \\[0.04in] %
& = & \!\!\!\! \mathbf{w}^{*}. %
\end{array}
\]
It follows from $(\ref{z lemma display3})$ that
$\mathsf{var}_{\mathbb{M}} \{ \mathbf{w} \approx \mathbf{w}' \} =
\mathsf{var}_{\mathbb{M}} \{ (\ref{id txyyx}), \mathbf{w}^{*} \approx
\mathbf{w}'\}$.
\end{proof}

\begin{lemma}
\label{*zi=zi}
Let $\mathbf{w} \approx \mathbf{w}'$ be any identity in canonical form where
\[
\mathbf{w} = \mathbf{w}_0 \prod_{i=1}^{n}(\mathbf{s}_i \mathbf{w}_i) \quad \text{and} \quad \mathbf{w}' = \mathbf{w}_0' \prod_{i=1}^{n} (\mathbf{s}_i \mathbf{w}_i') %
\]
and $\cont(\mathbf{w}_i)= \cont(\mathbf{w}_i')$ for each $i=0, 1, \ldots, n$.
Suppose that $\mathbf{w} \approx \mathbf{w}'$ does not satisfy the condition \eqref{equivalent}.
Then
\[
\mathsf{var}_{\mathbb{M}} \{ \mathbf{w} \approx \mathbf{w}' \} =
\mathsf{var}_{\mathbb{M}} \{\Lambda \}
\]
for some set $\Lambda$ of identities from $\{ (\ref{id xyyxt}), (\ref{id txyyx}), (\ref{id xtxsx})\}$.
\end{lemma}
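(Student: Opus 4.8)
The plan is to iterate Lemma~\ref{lem zi=zi} (and the same lemma applied to the reversed identity $\mathbf{w}' \approx \mathbf{w}$) so as to remove, one at a time, every place where $\mathbf{w}$ and $\mathbf{w}'$ disagree on the order of two non-simple letters, accumulating the resulting identities into $\Lambda$, until the two words coincide. Throughout, the content condition $\cont(\mathbf{w}_i) = \cont(\mathbf{w}_i')$ and the common simple skeleton $\mathbf{s}_1,\ldots,\mathbf{s}_n$ are held fixed; only the internal order inside the blocks $\mathbf{w}_k$ is altered. The terminal configuration is one that satisfies \eqref{equivalent}, at which point Lemma~\ref{z equivalent} forces $\mathbf{w}_k = \mathbf{w}_k'$ for every $k$, so that the surviving identity $\mathbf{w} \approx \mathbf{w}'$ is trivial and drops out, leaving $\mathsf{var}_{\mathbb{M}} \{ \mathbf{w} \approx \mathbf{w}' \} = \mathsf{var}_{\mathbb{M}} \{ \Lambda \}$.

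To set up a single step: since $\mathbf{w} \approx \mathbf{w}'$ fails \eqref{equivalent}, there is a block $k$ and letters with, say, $x \prec_{\mathbf{w}_k} y$ but $x \not\prec_{\mathbf{w}_k'} y$ (the reversed discrepancy is treated identically after interchanging the two sides of the identity). Among all such disagreeing pairs I would select one for which the gap $g-\ell$ is least, where $\mathbf{z}_\ell$ is the last perfect-square factor of $\mathbf{w}_k$ containing $x$ and $\mathbf{z}_g$ the first containing $y$; this minimality is exactly what supplies conditions (b) and (c) of Lemma~\ref{lem zi=zi}, because a middle letter violating either would itself provide a disagreeing pair of strictly smaller gap. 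Hypotheses (a) and (d) hold by construction, so Lemma~\ref{lem zi=zi} applies and replaces $\mathbf{w} \approx \mathbf{w}'$ by $\mathbf{w}^{*} \approx \mathbf{w}'$ together with one identity from $\{ (\ref{id xyyxt}), (\ref{id txyyx}), (\ref{id xtxsx}) \}$, where $\mathbf{w}^{*}$ is $\mathbf{w}$ with a perfect square on $\{x,y\}$ inserted. I would then restore $\mathbf{w}^{*}$ to canonical form via Lemma~\ref{canonical word}; since the inserted letters are already non-simple and already lie in $\cont(\mathbf{w}_k)$, neither the skeleton nor any block content changes, so the new identity is again of the form handled by the lemma.

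The crux is termination, and the right progress measure is $\mu(\mathbf{w},\mathbf{w}') = \sum_k \bigl( |{\prec_{\mathbf{w}_k}}| + |{\prec_{\mathbf{w}_k'}}| \bigr)$, the total number of strict block-precedences on the two sides. The decisive observation is that inserting occurrences of letters into a word can only destroy relations $a \prec b$ and never create them: adding an occurrence of $a$ can only push the last $a$ later, and adding an occurrence of $b$ can only push the first $b$ earlier. Hence passing from $\mathbf{w}_k$ to $\mathbf{w}_k^{*}$ deletes the pair $(x,y)$ from $\prec_{\mathbf{w}_k}$ and adds nothing, and returning to canonical form leaves $\prec$ unchanged, because the identities $\circledS$ preserve every block-precedence. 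Consequently each step strictly decreases the bounded non-negative integer $\mu$, so only finitely many steps occur; the process can continue precisely while some disagreement survives, so when it halts both inclusions $\prec_{\mathbf{w}_k} \subseteq \prec_{\mathbf{w}_k'}$ and $\prec_{\mathbf{w}_k'} \subseteq \prec_{\mathbf{w}_k}$ hold, that is, \eqref{equivalent} is satisfied, and Lemma~\ref{z equivalent} completes the argument. The main point to be careful about is that a single insertion typically removes several precedences at once, namely those of the form $x \prec z$ for the middle letters $z$; this is why progress must be tracked by the total count $\mu$ rather than by the number of disagreeing pairs, which need not decrease monotonically under the collateral removals.
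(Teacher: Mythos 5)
Your proposal is correct and follows essentially the same route as the paper: both arguments iterate Lemma~\ref{lem zi=zi} to destroy, one pair at a time, the block-precedences on which the two sides disagree (the paper does this in two sequential passes, first on $\mathbf{w}$ to reach \eqref{left equ} and then dually on $\mathbf{w}'$, while you interleave the two sides under the single measure $\mu$), both terminate because precedence pairs can only be destroyed and never created so the number of steps is bounded by the finitely many pairs of letters (the paper's bound $C^2_{|\cont(\mathbf{w})|}$), and both conclude via Lemma~\ref{z equivalent} that the residual identity is trivial. Your minimal-gap choice of the disagreeing pair $(x,y)$, which guarantees hypotheses (b) and (c) of Lemma~\ref{lem zi=zi}, and your explicit recanonicalization after each insertion are details the paper leaves implicit, so they sharpen rather than diverge from the published argument.
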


\begin{proof}
Since  $\mathbf{w} \approx \mathbf{w}'$ does not satisfy the condition \eqref{equivalent}, there must exist letters $x,y\in \cont(\mathbf{w}_k)= \cont(\mathbf{w}_k')$ for some $k$ such that $x$ and $y$ satisfy the conditions (1)-(4) of Lemma~\ref{zi=zi} or its dual conditions. First if there exist some $x_1$ and $y_1$ satisfy the conditions (1)-(4) in $\mathbf{w}$. Then by Lemma~\ref{zi=zi},
the identities $\{ (\ref{id xyyxt}), (\ref{id txyyx}), (\ref{id xtxsx})\}$ can be used to convert $\mathbf{w} \approx \mathbf{w}'$ into $\mathbf{w}^{(1)} \approx \mathbf{w}'$ such that
\begin{enumerate}
\item $x_1 \prec_{\mathbf{w}_k} y_1$ and $x_1 \not \prec_{\mathbf{w}_k^{(1)}} y_1$;
\item if $x_1 \not\prec_{\mathbf{w}_k} z$ (resp. $z \not\prec_{\mathbf{w}_k} y_1$) for any $z \in \cont(\mathbf{w})$, then $x_1 \not\prec_{\mathbf{w}_k^{(1)}} z$ (resp. $z \not\prec_{\mathbf{w}_k^{(1)}} y_1$);
\item $z \not\prec_{\mathbf{w}_k} t$ if and only if $z \not\prec_{\mathbf{w}_k^{(1)}} t$ for each $z , t \in \cont(\mathbf{w}) \setminus \{x_1, y_1\}$.
\end{enumerate}
whence $\mathsf{var}_{\mathbb{M}} \{ \mathbf{w} \approx \mathbf{w}' \} =
\mathsf{var}_{\mathbb{M}}\{\mathbf{w}^{(1)} \approx \mathbf{w}', \Lambda^{(1)} \}$ for some set $\Lambda^{(1)}$ of identities from $\{ (\ref{id xyyxt}), (\ref{id txyyx}), (\ref{id xtxsx})\}$.

Now if there still exist letters $x_2, y_2$ in $\mathbf{w}^{(1)} \approx \mathbf{w}'$ such that $x_2, y_2$ does not satisfy the conditions (1)-(4) in $\mathbf{w}^{(1)}$, then the above procedure can be repeated to
construct an identity $\mathbf{w}^{(2)} \approx
\mathbf{w}'$ and some subset $\Lambda^{(2)} \subseteq \{ (\ref{id xyyxt}), (\ref{id txyyx}), (\ref{id xtxsx})\}$.
The construction of $\mathbf{w}^{(1)} \approx \mathbf{w}',
\mathbf{w}^{(2)} \approx \mathbf{w}', \ldots$ and
$\Lambda^{(1)}, \Lambda^{(2)}, \ldots$ cannot continue
indefinitely since that is bounded above by $C_{|\cont(\mathbf{w})|}^{2}$.

This
procedure can be repeated to obtain an identity $\mathbf{w}^{*}
\approx \mathbf{w}'$ that satisfies the property
\begin{align}\label{left equ}
\mbox{for any $k=0,1,\ldots, n$, $x,y \in \cont(\mathbf{w}_k')$, if $x \not\prec_{\mathbf{w}_k'} y$, then $x \not\prec_{\mathbf{w}_k^{*}} y$.}
\end{align}
Hence $\mathsf{var}_{\mathbb{M}} \{ \mathbf{w} \approx \mathbf{w}' \} =
\mathsf{var}_{\mathbb{M}}\{\mathbf{w}^{*} \approx \mathbf{w}', \Lambda \}$.

Now if there exist letters $x$ and $y$ satisfy the conditions (1)-(4) in $\mathbf{w}'$ of Lemma~\ref{zi=zi}, then by the same arguments to $\mathbf{w}'$, we can construct an identity $\mathbf{w}^{*} \approx \mathbf{w}^{'*}$ that satisfies the property
\begin{align}\label{right equ}
\mbox{for any $k=0,1,\ldots, n$, $x,y \in \cont(\mathbf{w}_k^{*})$, if $x \not\prec_{\mathbf{w}_k^{*}} y$, then $x \not\prec_{\mathbf{w}_k^{'*}} y$.} %
\end{align}
Hence $\mathbf{S} \{ \mathbf{w}^{*} \approx \mathbf{w}'\} =
\mathbf{S}\{\mathbf{w}^{*} \approx \mathbf{w}^{'*}, \Lambda \}$.
Since $\mathbf{w}^{*} \approx \mathbf{w}^{'*}$ satisfies the conditions \eqref{left equ} and \eqref{right equ}, by Lemma~\ref{z equivalent} that $\mathbf{w}^{*} \approx \mathbf{w}^{'*}$ is the trivial identity, and so $\mathsf{var}_{\mathbb{M}} \{ \mathbf{w}^{*} \approx \mathbf{w}'\} =
\mathsf{var}_{\mathbb{M}}\{\Lambda \}$.
\end{proof}

\begin{lemma}
\label{cont-w}
Let $\mathbf{w} \approx \mathbf{w}'$ be any identity in canonical form where
\[
\mathbf{w} = \mathbf{w}_0 \prod_{i=1}^{n}(\mathbf{s}_i \mathbf{w}_i)  \quad \text{and} \quad \mathbf{w}' = \mathbf{w}_0' \prod_{i=1}^{n} (\mathbf{s}_i \mathbf{w}_i'). %
\]
If $\cont(\mathbf{w}_i)= \cont(\mathbf{w}_i')$ for some $i=1, \ldots, n$, then
$\mathbf{w} \approx \mathbf{w}' \Vdash \eqref{id xtxsx}$.%
\end{lemma}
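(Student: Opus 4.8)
I would follow the deduction scheme used in the proof of Lemma~\ref{z con}. The conclusion is driven by a block $i\in\{1,\dots,n\}$ at which $\cont(\mathbf{w}_i)$ and $\cont(\mathbf{w}_i')$ do not coincide: fix such an $i$ together with a (necessarily non-simple) letter $x$ lying in exactly one of them, and by the symmetry of $\approx$ assume $x\in\cont(\mathbf{w}_i)\setminus\cont(\mathbf{w}_i')$. Since $i\ge 1$, the factor $\mathbf{s}_i$ is nonempty (it belongs to $\mathcal{X}^{+}$), so I may pick a simple letter $s$ occurring in $\mathbf{s}_i$, which sits immediately to the left of the block $\mathbf{w}_i$. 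For the right flank I pick a simple letter $t$ from $\mathbf{s}_{i+1}$ when $i<n$; when $i=n$ there is no $\mathbf{s}_{n+1}$, so I instead work with the deducible identity $\mathbf{w}t\approx\mathbf{w}'t$ for a fresh letter $t$, which plays the same role on the right.

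The next step is to pass to the identity obtained by retaining only the letters $\{s,t,x\}$ and then pre- and post-multiplying by $x$; both are legitimate deductions in a monoid variety, deletion of a letter being substitution of the identity element. The payoff is to read off the two retained words. On the $\mathbf{w}$-side the occurrences of $x$ from blocks $\mathbf{w}_j$ with $j<i$ fall to the left of $s$ and those with $j>i$ fall to the right of $t$, while $\mathbf{w}_i$ contributes at least one $x$ strictly between $s$ and $t$; hence $\mathbf{w}_{\{s,t,x\}}=x^{a}sx^{b}tx^{c}$ with $a,c\ge 0$ and $b\ge 1$. On the $\mathbf{w}'$-side the only material between $s$ and $t$ is $\mathbf{w}_i'$ (the remaining letters of $\mathbf{s}_i$ and $\mathbf{s}_{i+1}$ are simple, hence are not $x$), and $\mathbf{w}_i'$ contains no $x$; hence $\mathbf{w}'_{\{s,t,x\}}=x^{a'}stx^{c'}$ with no $x$ between $s$ and $t$.

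The heart of the argument is then a routine collapse using only~\eqref{basis S xx}: the relations $x^{2}\approx x^{3}$ and $xyx\approx x^{2}yx\approx xyx^{2}$ absorb the outer powers $x^{a},x^{c},x^{a'},x^{c'}$ and reduce the inner block $x^{b}$ to a single $x$, so that surrounding the retained words by $x$ gives $x\cdot x^{a}sx^{b}tx^{c}\cdot x\approx xsxtx$ and $x\cdot x^{a'}stx^{c'}\cdot x\approx xstx$. Concatenating these two collapses with the retained instance of $\mathbf{w}\approx\mathbf{w}'$ yields
\[
xsxtx\stackrel{\eqref{basis S xx}}{\approx}x\,\mathbf{w}_{\{s,t,x\}}\,x\approx x\,\mathbf{w}'_{\{s,t,x\}}\,x\stackrel{\eqref{basis S xx}}{\approx}xstx,
\]
which is precisely the identity~\eqref{id xtxsx}; thus $\mathbf{w}\approx\mathbf{w}'\Vdash\eqref{id xtxsx}$.

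The one point requiring care is the bookkeeping that isolates the occurrence of $x$ between $s$ and $t$: one must confirm that its presence in $\mathbf{w}$ and its absence in $\mathbf{w}'$ survive the deletion of all other letters, and that the stray occurrences of $x$ coming from the other blocks contribute only the outer powers, which~\eqref{basis S xx} harmlessly absorbs. Since this is exactly the mechanism already used in Lemma~\ref{z con}, I expect the verification to be short.
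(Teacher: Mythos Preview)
Your proposal is correct and follows essentially the same route as the paper, which simply says the lemma follows from the proof of Lemma~\ref{z con}; you have faithfully reconstructed that argument, including the device of adjoining a fresh simple letter on the right when $i=n$ (the paper's version adjoins fresh letters $a,b$ on both sides uniformly, but since here $i\ge 1$ your one-sided variant suffices).
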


\begin{proof}
It follows from the proof of Lemma~\ref{z con}.
\end{proof}

\begin{lemma}
Let $\mathbf{w} \approx \mathbf{w}'$ be any identity.
\begin{enumerate}
\item If either $\cont (\mathbf{w}) \ne \cont (\mathbf{w}')$ or $\simp (\mathbf{w}) \ne \simp (\mathbf{w}')$, then $\mathbf{w} \approx \mathbf{w}' \Vdash x^2 \approx x \Vdash \eqref{id xtxsx}$; %
\item If $\mathsf{F_{SS}} (\mathbf{w}) \ne \mathsf{F_{SS}} (\mathbf{w}')$, then $\mathbf{w} \approx \mathbf{w}' \Vdash \eqref{id xtxsx}$; %
\item If $\mathbf{w}_{\simp} \ne  \mathbf{w}_{\simp}',$ then $\mathbf{w} \approx \mathbf{w}' \Vdash \eqref{id xtxsx}$.%
\end{enumerate}
\end{lemma}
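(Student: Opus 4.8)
The plan is to treat the three parts uniformly: in each case the hypothesis is the failure of one of the invariants that $\mathsf{var}\{A^1\times B^1\}$ forces on its identities (Lemma~\ref{sim}), and I want to convert that failure into a deduction of \eqref{id xtxsx} (for Part~(1), first into $x^2\approx x$). Everything rests on two elementary deductions that I would record first. \textbf{Engine A:} $x^2\approx x\Vdash\eqref{id xtxsx}$. Take the instance $xy_1^2y_2^2x\approx xy_1^2xy_2^2x$ of \eqref{basis S xy2z2x} with $n=2$ and substitute $y_1\mapsto s$, $y_2\mapsto t$; since $x^2\approx x$ holds as a law it gives $s^2\approx s$ and $t^2\approx t$, so the instance collapses to $xstx\approx xsxtx$, which is \eqref{id xtxsx}. \textbf{Engine B:} $xy\approx yx\Vdash\eqref{id xtxsx}$, because commutativity lets one sort letters, yielding $xsxtx\approx x^3st$ and $xstx\approx x^2st$, after which $x^3\approx x^2$ from \eqref{basis S xx} closes the gap. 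Throughout I use the monoid deduction step of retaining a chosen set of letters (recorded in the preliminaries) together with left/right multiplication by letters.

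For Part~(1), if $\cont(\mathbf{w})\ne\cont(\mathbf{w}')$ I pick $x\in\cont(\mathbf{w})\setminus\cont(\mathbf{w}')$; retaining $x$ turns the identity into $x^p\approx 1$ with $p=\mathsf{m}(x,\mathbf{w})\ge 1$ and $1$ the empty word. When $p=1$ this is $x\approx 1$; when $p\ge 2$ one has $x^2\approx x^p\approx 1$ by \eqref{basis S xx}, whence $x\approx x\cdot x^2\approx x^3\approx x^2\approx 1$, so again $x\approx 1$. Substituting then gives $x^2\approx 1\approx x$. If instead the contents agree but $\simp(\mathbf{w})\ne\simp(\mathbf{w}')$, I pick a letter that is simple in one word and non-simple in the other, say $\mathsf{m}(x,\mathbf{w})=1$ and $\mathsf{m}(x,\mathbf{w}')=q\ge 2$; retaining $x$ gives $x\approx x^q\approx x^2$ by \eqref{basis S xx}. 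Either way $\mathbf{w}\approx\mathbf{w}'\Vdash x^2\approx x$, and Engine~A finishes.

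For Parts~(2) and~(3) I may assume $\cont(\mathbf{w})=\cont(\mathbf{w}')$ and $\simp(\mathbf{w})=\simp(\mathbf{w}')$, since otherwise Part~(1) already delivers \eqref{id xtxsx}. For Part~(3), $\mathbf{w}_\simp\ne\mathbf{w}'_\simp$ with equal simple-letter sets forces two simple letters $x,y$ with $x\prec_{\mathbf{w}}y$ and $y\prec_{\mathbf{w}'}x$; retaining $\{x,y\}$ yields $xy\approx yx$ and Engine~B applies. For Part~(2), choose $xy\in\mathsf{F_{SS}}(\mathbf{w})\setminus\mathsf{F_{SS}}(\mathbf{w}')$ with $x,y$ simple. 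If in $\mathbf{w}'$ the two letters occur in the opposite order, or are separated by a simple letter, then retaining two suitable simple letters again produces a commutativity identity $xy\approx yx$, and Engine~B applies. The remaining case is $x\prec_{\mathbf{w}'}y$ with some non-simple letter $z$ occurring between them in $\mathbf{w}'$; as $xy$ is a factor of $\mathbf{w}$, no $z$ lies between $x$ and $y$ in $\mathbf{w}$. Retaining $\{x,y,z\}$, relabelling $z\mapsto x,\ x\mapsto s,\ y\mapsto t$, and multiplying both sides by $x$, I obtain an identity with sides $x^{a}stx^{b}$ and $x^{a'}sx^{c}tx^{b'}$ in which every $x$-exponent is at least $1$. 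By Lemma~\ref{canonical word} the first side reduces to the canonical word $x^2stx^2$ and the second to $x^2sx^2tx^2$; since $\circledS$ also carries $xstx$ to $x^2stx^2$ and $xsxtx$ to $x^2sx^2tx^2$, the derived identity becomes $xstx\approx xsxtx$, i.e.\ \eqref{id xtxsx}.

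I expect the last sub-case of Part~(2) to be the only real obstacle: one must check that after flanking by $x$ each maximal $x$-block (before $s$, between $s$ and $t$, after $t$) is non-empty, so that each canonicalises to the perfect square $x^2$ via \eqref{basis S xx} and \eqref{basis S xy2x}, and that every possible position of the separating letter genuinely falls into this pattern. This is precisely the deduction-direction reading of Lemma~\ref{sim}(2), so as a cross-check Part~(2) can instead be obtained as the contrapositive of Lemma~1.10 of \cite{LeeZhang}: were $\mathbf{w}\approx\mathbf{w}'$ not to imply \eqref{id xtxsx} over $\circledS$, the variety it defines would omit \eqref{id xtxsx} and hence preserve $\mathsf{F_{SS}}$, contradicting $\mathsf{F_{SS}}(\mathbf{w})\ne\mathsf{F_{SS}}(\mathbf{w}')$.
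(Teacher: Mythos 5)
Your proposal is correct, and it is genuinely more self-contained than the paper's own proof, which outsources the substance of parts (1) and (2) to external results. For (1) the paper invokes Lemma~1.9 of \cite{LeeZhang} to obtain $\mathbf{w} \approx \mathbf{w}' \Vdash y^2xy^2xy^2xy^2 \approx y^2xy^2 \Vdash x^2 \approx x$ and then merely asserts the step $x^2 \approx x \Vdash \eqref{id xtxsx}$; your letter-deletion argument reaches $x^2 \approx x$ directly, and your Engine~A (instantiating \eqref{basis S xy2z2x} with $n=2$) supplies exactly the final deduction the paper never writes down. You are also right to flag that this uses monoid-level deduction (substituting the empty word): that reading is not merely convenient but forced, since under pure semigroup deduction part (1) would be false --- a two-element null semigroup satisfies $\circledS \cup \{x^2y^2 \approx y^2\}$ but not $x^2 \approx x$ --- and it is legitimate here because the lemma serves the classification of monoid subvarieties. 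For (2) the paper says only that it ``follows from the proof of (2) of Lemma~\ref{sim}'', i.e.\ it relies on Lemma~1.10 of \cite{LeeZhang} in contrapositive form, which is precisely your closing cross-check; your explicit three-case derivation (opposite order, simple separator, non-simple separator) is new relative to the paper, and its delicate last subcase does go through: retaining $\{x,y,z\}$ and flanking by $x$ yields sides $x^a s t x^b$ and $x^{a'} s x^c t x^{b'}$ with all exponents at least $1$ (and $a+b \geq 4$ since $z$ is non-simple in $\mathbf{w}$ as well, the simple sets being equal), and the identities $xyx \approx x^2yx \approx xyx^2$ together with $x^3 \approx x^2$ normalise every $x$-block to $x^2$, so the derived identity collapses to $x^2stx^2 \approx x^2sx^2tx^2$ and hence to \eqref{id xtxsx}, since $\circledS$ carries $xstx$ to $x^2stx^2$ and $xsxtx$ to $x^2sx^2tx^2$. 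For (3) the paper reduces to (2) by deleting letters until the $\mathsf{F_{SS}}$-sets differ, whereas you extract an inverted pair of simple letters, deduce $xy \approx yx$, and finish with Engine~B; the two routes are equivalent, but yours bypasses (2) and is shorter. In sum: the paper's approach buys brevity by citation, while yours buys a proof checkable entirely within the present paper, including the two micro-deductions (Engines A and B) that the paper leaves implicit.
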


\begin{proof}
(1) If $\mathbf{w} \approx \mathbf{w}'$ satisfy either $\cont (\mathbf{w}) \ne \cont (\mathbf{w}')$ or $\simp (\mathbf{w}) = \simp (\mathbf{w}')$, then monoid $N_2^1$ does not in the variety $\mathsf{var}_{\mathbb{M}}\{A^1 \times B^1\}$. It follows from Lemma~1.9 of \cite{LeeZhang}, that
\[
\mathbf{w} \approx \mathbf{w}' \Vdash y^2xy^2xy^2xy^2\approx y^2xy^2 \Vdash x^2 \approx x \Vdash \eqref{id xtxsx}
\]

(2) It follows from the proof of (2) of Lemma~\ref{sim}.

(3) If $\mathbf{w}_{\simp} \ne  \mathbf{w}_{\simp}',$ then we can derive some identity $\overline{\mathbf{w}} \approx \overline{\mathbf{w}'}$ from $\mathbf{w} \approx  \mathbf{w}'$ by deleting some letters in $\cont(\mathbf{w})\cup \cont(\mathbf{w}')$, such that
$\mathsf{F_{SS}} (\overline{\mathbf{w}}) \ne \mathsf{F_{SS}} (\overline{\mathbf{w}'})$. Then it follows from (2) that
$\mathbf{w} \approx \mathbf{w}' \Vdash \eqref{id xtxsx}$.
\end{proof}

\begin{theorem}
The subvariety lattice of $\mathsf{var}_{\mathbb{M}} \{A^1 \times B^1\}$ is as shown in Figure \ref{lattice}.
\end{theorem}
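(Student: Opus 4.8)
The plan is to show that the subvariety lattice is finite by proving that \emph{every} proper subvariety of $\mathsf{var}_{\mathbb{M}}\{A^1 \times B^1\}$ is defined, relative to $\circledS$, by some subset of the finitely many critical identities $\eqref{id xyyxt}$, $\eqref{id txyyx}$, $\eqref{id xtxsx}$, $\eqref{id xxyy}$ of Lemma~\ref{lem: non-id} together with $x^2 \approx x$; and then to identify each of the finitely many resulting varieties with one generated by the small monoids of the Proposition, compute the inclusions among them, and read off Figure~\ref{lattice}. A useful simplification throughout is that $A^1 \times B^1$ is self-dual (since $B^1$ is dual to $A^1$), so both the variety and its lattice are self-dual; under the word-reversing duality the identities $\eqref{id xyyxt}$ and $\eqref{id txyyx}$ are interchanged while $\eqref{id xtxsx}$ and $\eqref{id xxyy}$ are fixed (the latter because $\circledS \vdash xy^2x \approx yx^2y$), which pairs $\mathbf{A}_0^1 \leftrightarrow \mathbf{B}_0^1$ and $\mathbf{L}^1 \leftrightarrow \mathbf{R}^1$ and halves the bookkeeping.

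First I would establish the reduction. Given any identity $\mathbf{u} \approx \mathbf{v}$ satisfied by a subvariety, put both sides in canonical form by Lemma~\ref{canonical word}. If the two sides disagree on content, on simple letters, on the factor set $\mathsf{F_{SS}}$, or on the simple-letter projection, then the lemma immediately preceding the theorem yields $\eqref{id xtxsx}$ (and in the content/simple case even $x^2 \approx x$) modulo $\circledS$. Otherwise these invariants agree, so $n = n'$ and $\mathbf{s}_k = \mathbf{s}_k'$; if some block contents differ, Lemma~\ref{cont-w} again produces $\eqref{id xtxsx}$. In the remaining case all block contents coincide, and the only possible discrepancy is the relative order of non-simple letters inside a block. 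If the equivalence condition \eqref{equivalent} holds the identity is trivial by Lemma~\ref{z equivalent}; if it fails, Lemma~\ref{*zi=zi} shows that the monoid variety it generates equals $\mathsf{var}_{\mathbb{M}}\{\Lambda\}$ for some $\Lambda \subseteq \{\eqref{id xyyxt}, \eqref{id txyyx}, \eqref{id xtxsx}\}$. Taking the union of the $\Lambda$'s over a defining set of identities, every subvariety is of the form $\mathsf{var}_{\mathbb{M}}\{\Lambda\}$ with $\Lambda$ a subset of the fixed critical list; in particular there are only finitely many subvarieties and each is finitely based.

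Next I would organize these finitely many candidates into the lattice in Figure~\ref{lattice}. This breaks into three tasks. First, determine the implications among subsets of the critical list modulo $\circledS$ (for instance which critical identities follow from $x^2 \approx x$, and how $\eqref{id xyyxt}$ and $\eqref{id txyyx}$ interact with $\eqref{id xtxsx}$ and $\eqref{id xxyy}$), collapsing candidates that define the same variety. Second, identify each surviving variety with one generated by the small monoids: I expect $\mathbf{N}^1$, $\mathbf{M}^1$, $\mathbf{L}^1$, $\mathbf{R}^1$, $\mathbf{A}_0^1$ and $\mathbf{B}_0^1$ from the Proposition to supply the nodes, using duality to transfer each verification to its dual. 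Third, separate the distinct nodes: for every non-inclusion and every covering claimed in the figure, exhibit a concrete monoid from this list (or auxiliaries such as $J^1$ and $N_2^1$) that satisfies one node's identities but not the other's, exactly in the manner the failures were witnessed in Lemma~\ref{lem: non-id}.

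The hard part will be the second and third tasks. Identifying $\mathsf{var}_{\mathbb{M}}\{\Lambda\}$ with a specific small monoid requires both that the monoid satisfy $\circledS \cup \Lambda$ and that $\circledS \cup \Lambda$ be a basis for it; the latter means rerunning the canonical-form machinery of Section~\ref{sec basis} inside the smaller variety, where the extra identities $\Lambda$ give sharper normal forms and one must check that the basis in the Proposition is recovered. Ruling out unexpected coincidences or inclusions among the candidates then demands a separating example for each relevant pair, and assembling these consistently — so that the partial order they determine is precisely the Hasse diagram drawn — is the principal source of case analysis, mitigated but not eliminated by the self-duality of the lattice.
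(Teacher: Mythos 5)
Your overall architecture (reduce via canonical forms and the invariance lemmas, then assemble the lattice from varieties defined by $\circledS$ plus critical identities, exploiting self-duality) matches the paper's proof for the \emph{top} of the lattice, but your reduction contains a genuine gap. You claim that ``taking the union of the $\Lambda$'s over a defining set of identities, every subvariety is of the form $\mathsf{var}_{\mathbb{M}}\{\Lambda\}$ with $\Lambda$ a subset of the fixed critical list.'' This conflates one-way deducibility with equational equivalence. When an identity $\mathbf{w} \approx \mathbf{w}'$ violates one of the invariance conditions (content, simple letters, $\mathsf{F_{SS}}$, simple projection, or block contents), the relevant lemmas give only $\mathbf{w} \approx \mathbf{w}' \Vdash \eqref{id xtxsx}$, i.e.\ the \emph{containment} $\mathsf{var}_{\mathbb{M}}\{\mathbf{w} \approx \mathbf{w}'\} \subseteq \mathsf{var}_{\mathbb{M}}\{\eqref{id xtxsx}\}$, not equality; the exact identification $\mathsf{var}_{\mathbb{M}}\{\mathbf{w} \approx \mathbf{w}'\} = \mathsf{var}_{\mathbb{M}}\{\Lambda\}$ of Lemma~\ref{*zi=zi} is available only in the complementary case where all these invariants agree and the sides differ merely in the ordering of non-simple letters within blocks. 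A concrete counterexample to your claim: the identity $xy \approx yx$ implies $\eqref{id xtxsx}$ modulo $\circledS$ (its $\mathsf{F_{SS}}$-sets differ), yet $\mathsf{var}_{\mathbb{M}}\{\circledS \cup \{xy \approx yx\}\}$ is the commutative variety $\mathsf{var}_{\mathbb{M}}\{N^1\}$-side of the lattice and equals no $\mathsf{var}_{\mathbb{M}}\{\Lambda\}$ with $\Lambda$ drawn from $\{\eqref{id xyyxt}, \eqref{id txyyx}, \eqref{id xtxsx}, \eqref{id xxyy}, x^2 \approx x\}$, since every such $\mathsf{var}_{\mathbb{M}}\{\Lambda\}$ contains noncommutative monoids. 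Consequently your scheme never produces the bottom portion of Figure~\ref{lattice} (the varieties $\mathsf{var}_{\mathbb{M}}\{L^1\}$, $\mathsf{var}_{\mathbb{M}}\{R^1\}$, $\mathsf{var}_{\mathbb{M}}\{M^1\}$, $\mathsf{var}_{\mathbb{M}}\{N^1\}$, $\mathsf{var}_{\mathbb{M}}\{S\}$, $\mathsf{var}_{\mathbb{M}}\{T\}$, whose bases need identities such as $xyx \approx x^2y$ or $xy \approx yx$ outside your critical list), and the finiteness of the subvariety lattice does not follow by your route.

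The paper closes exactly this hole differently: identities violating the conditions (a)--(d) place the subvariety inside $\mathsf{var}_{\mathbb{M}}\{\circledS \cup \{\eqref{id xtxsx}\}\} = \mathsf{var}_{\mathbb{M}}\{A_0^1\}$, and the entire subvariety lattice of $\mathsf{var}_{\mathbb{M}}\{A_0^1\}$ is then imported wholesale from \cite{Lee08} rather than re-derived; Lemma~\ref{*zi=zi} together with Lemma~\ref{z equivalent} is used only to show that the interval above $\mathsf{var}_{\mathbb{M}}\{A_0^1\}$ consists precisely of the varieties $\mathsf{var}_{\mathbb{M}}\{\Lambda\}$, which are then pinned down as $\mathsf{var}_{\mathbb{M}}\{A^1\}$, $\mathsf{var}_{\mathbb{M}}\{B^1\}$, their meet, and $\mathsf{var}_{\mathbb{M}}\{Q^1\}$ (the last via Proposition~4.3 of \cite{LeeLi11}). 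Your closing remark about ``rerunning the canonical-form machinery inside the smaller variety'' gestures toward what would be needed to classify the subvarieties of $\mathsf{var}_{\mathbb{M}}\{A_0^1\}$ from scratch, but as written it is contradicted by your own finiteness claim, and without either that substantial extra work or the citation to \cite{Lee08}, the proof of the theorem is incomplete.
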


\begin{proof}
Let $\mathbf{w} \approx \mathbf{w}'$ be any nontrivial identity in canonical form where
\[
\mathbf{w} = \mathbf{w}_0 \prod_{i=1}^{n}(\mathbf{s}_i \mathbf{w}_i)  \quad \text{and} \quad \mathbf{w}' = \mathbf{w}_0' \prod_{i=1}^{n'} (\mathbf{s}_i' \mathbf{w}_i'). %
\]
If $\mathbf{w} \approx \mathbf{w}'$ does not satisfy
one of the following conditions:
\begin{enumerate}
\item[(a)] $\cont (\mathbf{w}) = \cont (\mathbf{w}')$ and $\simp (\mathbf{w}) = \simp (\mathbf{w}')$; %
\item[(b)] $\mathsf{F_{SS}} (\mathbf{w}) = \mathsf{F_{SS}}$; %
\item[(c)] $\mathbf{w}_{\simp} =  \mathbf{w}_{\simp}'$;
\item[(d)] $\cont(\mathbf{w}_i)= \cont(\mathbf{w}_i')$,%
\end{enumerate}
then $\mathbf{w} \approx \mathbf{w}' \Vdash \eqref{id xtxsx}$. Hence each subvariety that does not satisfy one of the conditions (a)-(d) is contained in the variety $\mathsf{var}_{\mathbb{M}}\circledS \cup \{\eqref{id xtxsx}\})$. It is easy to see that the variety $\mathsf{var}_{\mathbb{M}}\circledS \cup \{\eqref{id xtxsx}\}$ is just the variety $\mathsf{var}_{\mathbb{M}}\{A_0^1\}$ and its subvariety lattice can be found in \cite{Lee08}.

If $\mathbf{w} \approx \mathbf{w}'$ satisfy all of the conditions (a)-(d), then since $\mathbf{w} \approx \mathbf{w}'$ is nontrivial, it follows that $\mathbf{w}_k \ne \mathbf{w}_k'$ for some $k=0,\ldots,n$. Now it follows from Lemma~\ref{*zi=zi} that
\[
\mathsf{var}_{\mathbb{M}} \{ \mathbf{w} \approx \mathbf{w}' \} =
\mathsf{var}_{\mathbb{M}} \{\Lambda \}
\]
for some set $\Lambda$ of identities from $\{ (\ref{id xyyxt}), (\ref{id txyyx}), (\ref{id xtxsx})\}$.

It is easy to show that
\[
\eqref{id xxyy} \Vdash \eqref{id xyyxt},\quad \eqref{id xxyy} \Vdash \eqref{id txyyx},\quad \eqref{id xtxsx} \Vdash \eqref{id xyyxt},\quad \eqref{id xtxsx} \Vdash \eqref{id txyyx}.
\]
It follows from Proposition 4.3 of \cite{LeeLi11} that $\mathsf{var}_{\mathbb{M}}\{\circledS \cup \{\eqref{id xxyy}\}\}={\mathbb Q}^1$.
Clearly, the variety $\mathsf{var}_{\mathbb{M}} \circledS \cup \{\eqref{id xtxsx}\}=\mathsf{var}_{\mathbb{M}} \{A_0^1\}$ and $\mathsf{var}_{\mathbb{M}} \circledS \cup \{\eqref{id xxyy}\}=\mathsf{var}_{\mathbb{M}} \{Q^1\}$ are incomparable. It is routine to verify that $\mathsf{var}_{\mathbb{M}}\{A^{1}\}$ satisfy the identity $\eqref{id txyyx}$ but not $\eqref{id xyyxt}$, and $\mathsf{var}_{\mathbb{M}}\{A^{1})$ satisfy the identity $\eqref{id xyyxt}$ but not $\eqref{id txyyx}$, and the variety $\mathsf{var}_{\mathbb{M}}\circledS \cup \{\eqref{id xyyxt}\}$ and $\mathsf{var}_{\mathbb{M}}\circledS \cup \{\eqref{id txyyx}\}$ are incomparable. Hence $\mathsf{var}_{\mathbb{M}}\circledS \cup \{\eqref{id txyyx}\}=\mathsf{var}_{\mathbb{M}}\{A^{1}\}$ and $\mathsf{var}_{\mathbb{M}}\circledS \cup \{\eqref{id xyyxt}\}=\mathsf{var}_{\mathbb{M}}\{B^1\}$, which are the maximal proper monoid subvarieties of $\mathsf{var}_{\mathbb{M}}\{A^1 \times B^1\}$. Hence
the monoid subvariety lattice of $\mathsf{var}_{\mathbb{M}}\{A^1 \times B^1\}$ is as shown in Figure \ref{lattice}.
\end{proof}

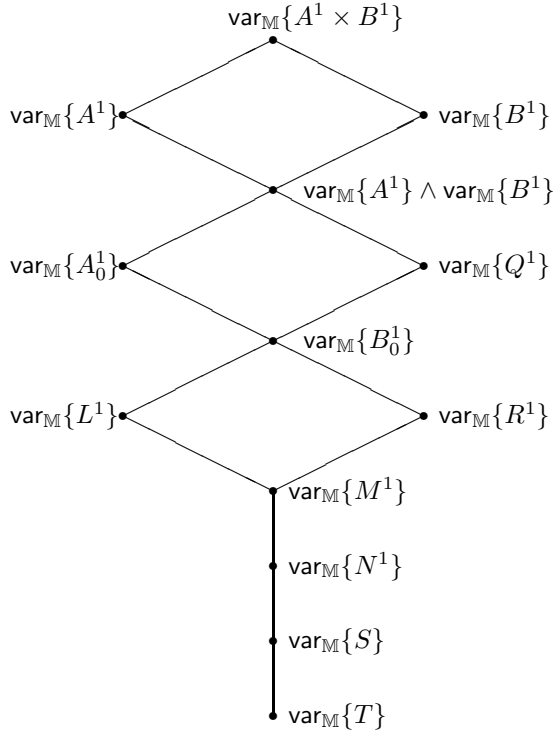
\begin{figure}[htb]
\begin{center}
\begin{picture}(200,250)(0,5)
\setlength{\unitlength}{1mm}
\put(10,40){\circle*{1}}\put(10,60){\circle*{1}}\put(10,80){\circle*{1}}%
\put(30,0){\circle*{1}}\put(30,10){\circle*{1}}\put(30,20){\circle*{1}}\put(30,30){\circle*{1}} \put(30,50){\circle*{1}}\put(30,70){\circle*{1}} \put(30,90){\circle*{1}}%
\put(50,40){\circle*{1}}\put(50,60){\circle*{1}}\put(50,80){\circle*{1}}%
\put(10,40){\line(2,1){40}}
\put(10,60){\line(2,1){40}}
\put(10,80){\line(2,1){20}}
\put(10,40){\line(2,-1){20}}%
\put(50,40){\line(-2,1){40}}
\put(50,60){\line(-2,1){40}}
\put(50,80){\line(-2,1){20}}
\put(30,30){\line(2,1){20}}%
\put(30,30){\line(0,-1){30}}
\put(32,0){\makebox(0,0)[l]{$\mathsf{var}_{\mathbb{M}}\{T\}$}}%
\put(32,10){\makebox(0,0)[l]{$\mathsf{var}_{\mathbb{M}}\{S\}$}}%
\put(32,20){\makebox(0,0)[l]{$\mathsf{var}_{\mathbb{M}}\{N^1\}$}}%
\put(32,30){\makebox(0,0)[l]{$\mathsf{var}_{\mathbb{M}}\{M^1\}$}}%
\put(-5,40){\makebox(0,0)[l]{$\mathsf{var}_{\mathbb{M}}\{L^1\}$}}%
\put(52,40){\makebox(0,0)[l]{$\mathsf{var}_{\mathbb{M}}\{R^1\}$}}%
\put(34,50){\makebox(0,0)[l]{$\mathsf{var}_{\mathbb{M}}\{B_0^1\}$}}%
\put(-5,60){\makebox(0,0)[l]{$\mathsf{var}_{\mathbb{M}}\{A_0^1\}$}}%
\put(52,60){\makebox(0,0)[l]{$\mathsf{var}_{\mathbb{M}}\{Q^1\}$}}%
\put(34,70){\makebox(0,0)[l]{$\mathsf{var}_{\mathbb{M}}\{A^1 \} \wedge \mathsf{var}_{\mathbb{M}}\{B^1 \}$}}%
\put(-5,80){\makebox(0,0)[l]{$\mathsf{var}_{\mathbb{M}}\{A^1 \}$}}%
\put(52,80){\makebox(0,0)[l]{$\mathsf{var}_{\mathbb{M}}\{B^1\}$}}%
\put(24,93){\makebox(0,0)[l]{$\mathsf{var}_{\mathbb{M}}\{A^1 \times B^1\}$}}%
\end{picture}
\end{center}
\caption{The monoid subvariety lattice $\mathsf{var}_{\mathbb{M}}\{A^1 \times B^1\}$} \label{lattice}
\end{figure}

{\bf Acknowledgments} The authors would like to express their
gratitude to Dr E. W. H. Lee for his comments and helps.

\end{document}